\newtheorem{theorem}{Theorem}
\newtheorem{lemma}{Lemma}
\newtheorem{proposition}{Proposition}
\theoremstyle{definition}
\newtheorem{definition}{\sc Definition}
\newtheorem*{definition*}{\sc Definition}
\newtheorem*{examples}{Examples}
\newtheorem{remark}{\bf Remark}
\newtheorem*{remark*}{\bf Remark}
\newtheorem*{example*}{\bf Example}
\newcommand{\loc}{{\rm loc}}
\def\expandafter\normalsize\expandafter{%
    \normalsize
    \setlength\abovedisplayshortskip{8pt}
    \setlength\belowdisplayshortskip{8pt}
}
\begin{document}

\title[Stochastic transport equation]{Stochastic transport equation with singular drift}

\author{Damir Kinzebulatov}

\address{D\'{e}partement de math\'{e}matiques et de statistique, Universit\'{e} Laval, Qu\'{e}bec, QC, G1V 0A6, Canada}

\email{damir.kinzebulatov@mat.ulaval.ca}

\author{Yuliy A. Sem\"{e}nov}

\address{University of Toronto, Department of Mathematics, Toronto, ON, M5S 2E4, Canada}

\email{semenov.yu.a@gmail.com}

\author{Renming Song}

\address{Department of Mathematics, University of Illinois, Urbana, IL 61801, USA}

\email{rsong@math.uiuc.edu}

%\dedicatory{\today}

\begin{abstract}
We prove existence, uniqueness and Sobolev regularity of weak solution of 
the Cauchy problem of the stochastic transport equation with
drift in a large class of singular vector fields containing, in particular, the $L^d$ class, the weak $L^d$ class, as well as some vector fields that are not even  in 
$L_{\loc}^{2+\varepsilon}$ for any $\varepsilon>0$.
\end{abstract}

\subjclass[2010]{35R60 (primary), 35A21 (secondary)}

\thanks{The research of D. K. is supported by grants from NSERC and FRQNT.
The research of R. S. is supported in part by a grant from the Simons Foundation (\#429343).}

\maketitle

\medskip

\section{Introduction}

\label{intro_sect}

Throughout this paper we assume $d\ge 3$. Let $B_t$ be a Brownian motion in $\mathbb R^d$ 
defined on a probability space $(\Omega, \mathcal F,\mathbb P)$ 
with respect to a complete and right-continuous filtration $\mathcal F_t$. 
Let $\circ$ denote the Stratonovich multiplication. Set 
$L^p \equiv L^p(\mathbb R^d) \equiv L^p(\mathbb R^d,dx)$, 
$L^p_{\loc} \equiv L^p_{\loc}(\mathbb R^d), W^{1, p}\equiv W^{1, p}(\mathbb R^d), W^{1, p}_{\loc}\equiv W^{1, p}_{\loc}(\mathbb R^d), C^\infty_c\equiv C^\infty_c(\mathbb R^d)$.
We denote by $\|\cdot\|_{p \rightarrow q}$ the operator norm $\|\cdot\|_{L^p \rightarrow L^q}$.

\medskip

The subject of this paper is the problem of existence, uniqueness and Sobolev regularity of weak solution to the Cauchy problem for the stochastic transport equation (STE)
\begin{equation}
\label{eq1}
\begin{array}{c}
du+b \cdot\nabla u dt + \sigma \nabla u \circ dB_t=0 \quad \text{ on } (0,\infty) \times \mathbb R^d, 
\\[3mm]
u|_{t=0} = f ,
\end{array}
\end{equation}
where $u(t,x)$ is a scalar random field, $\sigma \neq 0$, 
$f$ is in $L^p$ or $W^{1, p}$, 
and $b:\mathbb R^d \rightarrow \mathbb R^d$ is in the class of \textit{form-bounded} vector fields (see definition below), a large class of singular vector fields containing, in particular, 
vector fields $b$ with $|b| \in L^d$,  or with $|b|$ in the weak $L^d$ class, 
as well as some vector fields $b$ with $|b| \not \in L_{\loc}^{2+\varepsilon}$
for any $\varepsilon>0$. 

It is well known that the Cauchy problem for the deterministic transport equation $\partial_t u + b \cdot \nabla u=0$ (corresponding to $\sigma=0$ in \eqref{eq1}) is in general not well posed already for a bounded but discontinuous $b$. Moreover, in that case, even if the initial function $f$ is regular, one can not hope that 
the corresponding solution $u$ will be regular 
immediately after $t=0$.
This, however, changes if one adds the noise term $\sigma \nabla u \circ dB_t$, $\sigma>0$. 
For the stochastic STE \eqref{eq1}, a unique weak solution exists and is regular for
some discontinuous $b$. 
This effect of regularization and well-posedness by noise, demonstrated by the STE, attracted considerable interest in the past few years, as a part of the more general program of establishing well-posedness by noise for SPDEs whose deterministic counterparts arising in fluid dynamics are not well-posed, see \cite{BFGM, GM} 
for detailed discussions and further references.

In \cite{BFGM}, the authors establish existence, uniqueness and Sobolev $W^{1,p}$-regularity 
(up to the initial time $t=0$, with $p$ large)
for weak solutions of \eqref{eq1} with time-dependent drift $b$ satisfying
$$
|b(\cdot,\cdot)| \in L^q\bigl([0,\infty),L^r+L^\infty\bigr), \quad \frac{d}{r}+\frac{2}{q} \leqslant 1
$$
(actually, \cite{BFGM} allows $b=b_1+b_1$ with $b_1$ satisfying the condition above and $b_2$ being continuously differentiable with at most linear growth at infinity; their uniqueness result imposes additional assumptions on ${\rm div\,}b$).
They apply this result to study the SDE
\begin{equation}
\label{sde0}
X_t=x-\int_s^t b(r,X_r)dr+\sigma (B_t-B_s),
\end{equation}
constructing, in particular, a unique, $W^{1,p}$-regular stochastic Lagrangian flow that solves \eqref{sde0}
for a.e.\,$x \in \mathbb R^d$. The STE can be viewed as the equation behind both the SDE (via path-wise interpretation of the STE and the SDE, see \cite{BFGM}) and the parabolic equation $(\partial_t - \frac{\sigma^2}{2}\Delta + b \cdot \nabla)v=0$ (arising from \eqref{eq1} upon taking expectation, i.e.\,$v=\mathbb E[u]$, see, if needed, \eqref{eq_ito} below).

In this paper, we show that 
the regularity and well-posedness for \eqref{eq1} hold for 
a much larger class of drifts $b$,
at least in the time-independent case $b=b(x)$ (see, however, Remark \ref{time_rem} below concerning time-dependent $b$). 

\begin{definition}
A Borel vector field $b:\mathbb R^d \rightarrow \mathbb R^d$ is said to be 
form-bounded with relative bound $\delta>0$, written as $b\in \mathbf{F}_\delta$,
if $|b| \in L^2_{\loc}$ and there exists a constant $\lambda=\lambda_\delta \geq 0$ such that
\begin{equation*}
\||b|(\lambda -\Delta)^{-\frac{1}{2}}\|_{2\rightarrow 2}\leq \sqrt{\delta}.
\end{equation*}
\end{definition}

It is easily seen that the condition $b \in \mathbf{F}_\delta$ can be stated equivalently 
as a quadratic form inequality 
$$
\| b\varphi\|_2^2 \leq \delta \| \nabla\varphi\|_2^2 + c_\delta \|\varphi\|_2^2, \quad \varphi 
\in W^{1,2},
$$
for a constant $c_\delta\,(=\lambda \delta)$. 
Let us also note that
$$
b_1 \in \mathbf{F}_{\delta_1}, b_2 \in \mathbf{F}_{\delta_2} \quad \Rightarrow \quad b_1+b_2 \in \mathbf{F}_\delta, \qquad \sqrt{\delta}:=\sqrt{\delta_1}+\sqrt{\delta_2}.
$$

\begin{examples}
1.~Any vector field $$b \in L^d(\mathbb R^d,\mathbb R^d) + L^\infty(\mathbb R^d,\mathbb R^d)$$ 
is in $\mathbf{F}_\delta$ for $\delta>0$ that can be chosen arbitrarily small. 
Indeed, for any $\varepsilon>0$ we can write $b=\mathsf{f}+\mathsf{h}$ with $\|\mathsf{f}\|_d<\varepsilon$, $\mathsf{h} \in L^\infty(\mathbb R^d,\mathbb R^d)$. It follows from H\"{o}lder's inequality and the Sobolev embedding theorem that
for any $g \in L^2$,
\begin{align*}
\||b|(\lambda-\Delta)^{-\frac{1}{2}}g\|_2 & \leq \|\mathsf{f}\|_d \|(\lambda-\Delta)^{-\frac{1}{2}}g\|_{\frac{2d}{d-2}} + \|\mathsf{h}\|_\infty \lambda^{-\frac{1}{2}}\|g\|_2 
\\
& \leq c\|\mathsf{f}\|_d \|g\|_2 + \|\mathsf{h}\|_\infty \lambda^{-\frac{1}{2}}\|g\|_2  \leq (c+1)\varepsilon \|g\|_2 \quad \text{ for } \lambda=\varepsilon^{-2}\|\mathsf{h}\|^{-2}_\infty.
\end{align*}

2.~The class $\mathbf{F}_\delta$ also contains vector fields having critical-order singularities, such as $$b(x)=\pm\sqrt{\delta}\frac{d-2}{2}|x|^{-2}x$$ (by Hardy's inequality $\frac{(d-2)^2}{4}\||x|^{-1}\varphi\|_2^2 \leq \|\nabla \varphi\|_2^2$, $\varphi \in W^{1,2}$).

3.~More generally, the class $\mathbf{F}_\delta$ contains vector fields
$b$ with $|b|$ in $L^{d,w}$ (the weak $L^d$ space). Recall that a Borel function $h:\mathbb R^d \rightarrow \mathbb R$ is in $L^{d,w}$ if $$\|h\|_{d,w}:=\sup_{s>0}s|\{x \in \mathbb R^d: |h(x)|>s\}|^{1/d}<\infty.$$ By the Strichartz inequality with sharp constant \cite[Prop.~2.5, 2.6, Cor.~2.9]{KPS}, if $|b|$ in $L^{d,w}$, then 
$b \in \mathbf{F}_{\delta_1}$ with 
\begin{align*}
\sqrt{\delta_1}&=\||b| (\lambda - \Delta)^{-\frac{1}{2}} \|_{2 \rightarrow 2} \\ & \leq
\|b\|_{d,w} \Omega_d^{-\frac{1}{d}} \||x|^{-1} (\lambda - \Delta)^{-\frac{1}{2}} \|_{2 \rightarrow 2} \\ & \leq \|b\|_{d,w} \Omega_d^{-\frac{1}{d}} \frac{2}{d-2},
\end{align*}
where $\Omega_d=\pi^{\frac{d}{2}}\Gamma(\frac{d}{2}+1)$ is the volume of the unit ball in $\mathbb R^d$. 

We also note that if $h \in L^2(\mathbb R)$, $T:\mathbb R^d \rightarrow \mathbb R$ is a linear map, then the vector field $b(x)=h(Tx)e$, where $e \in \mathbb R^d$, is in $\mathbf{F}_{\delta}$ with appropriate $\delta$, but $|b|$ may not be in $L^{d,w}_{\loc}$.

4.~More generally, the class $\mathbf{F}_\delta$ contains vector fields in the Campanato-Morrey class and the Chang-Wilson-Wolff class, with $\delta$ depending on the respective norms of the vector field in these classes, see \cite{CWW}.

5.~We note that 
there exists $b \in \mathbf{F}_\delta$ such that $|b| \not\in L^{2+\varepsilon}_{\loc}(\mathbb R^d,\mathbb R^d)$ for any $\varepsilon>0$,
e.g., consider
$$
|b(x)|^2=C\frac{\mathbf{1}_{B(0,1+\alpha)} - \mathbf{1}_{B(0,1-\alpha)}}{\big| |x|-1\big|^{-1}(-\ln\big||x|-1\big|)^\beta}, \quad \beta>1, \quad 0<\alpha<1.
$$

We emphasize that the condition $b \in \mathbf{F}_\delta$ is not a refinement of $|b| \in L^d+L^\infty$ in the sense that $\mathbf{F}_\delta$ is not situated between $L^d+L^\infty$ and $L^p+L^\infty$, $p<d$. In contrast to the elementary sub-classes of $\mathbf{F}_\delta$ listed above, the class $\mathbf{F}_\delta$ is defined in terms of the operators that, essentially, constitute the equation in \eqref{eq1}.
\end{examples}

The key result of this paper is the Sobolev regularity of 
solutions $u$ to the Cauchy problem for the STE \eqref{eq1}:
\begin{equation}
\label{reg0}
\sup_{t \in [0,T]}\bigl\|\mathbb E|\nabla u|^{2q}\bigr\|_2 \leq C\|\nabla f\|^{2q}_{4q}, \quad q=1,2,\dots,
\end{equation}
provided that $b$ is in $\mathbf{F}_\delta$ with $\delta$ smaller than a certain explicit constant, see Theorem \ref{thm1_reg}. This is a stochastic (parabolic) counterpart of the Sobolev regularity 
%estiamtes for solutions of the corresponding deterministic elliptic equation 
estimates for solutions of the corresponding deterministic elliptic equation 
established in \cite{KS}.
More precisely, in \cite{KS} the authors consider the 
operator $-\Delta + b \cdot \nabla$, $b \in \mathbf{F}_\delta$ with $0<\delta<1 \wedge \bigl(\frac{2}{d-2}\bigr)^2$, $d \geq 3$ and 
establish the following Sobolev regularity of solutions 
$v$ to the elliptic equation $(\mu-\Delta +b \cdot \nabla)v=f$ in $L^q$ for $2 \vee (d-2)\leq q < \frac{2}{\sqrt{\delta}}$:
\begin{equation}
\label{reg}
\begin{array}{c}
\|\nabla v \|_{\frac{qd}{d-2}} \leq  K \|f\|_q, 
\end{array}
\end{equation}
with $K$ depending only on $d$, $q$, the relative bound 
$\delta$ and $c_\delta$. 
The estimate \eqref{reg} is needed in \cite{KS} to run a Moser-type  iteration procedure that yields the 
Feller semigroup corresponding to $-\Delta +b \cdot \nabla$.
It was established in \cite{KiS2}
that, given $b \in \mathbf{F}_\delta$ with $\delta<1 \wedge \bigl(\frac{2}{d-2}\bigr)^2$, this Feller semigroup determines, for every starting point $x \in \mathbb R^d$, a weak solution to the SDE 
\begin{equation}
\label{sde0_}
X_t=x-\int_0^t b(X_r)dr + \sqrt{2}B_t
\end{equation}
 (see also \cite{KiS} where the authors consider drifts in a larger class).

The approach to studying SDEs via regularity theory of the STE, developed in \cite{BFGM}, can be combined with Theorem \ref{thm1_reg}
to obtain strong existence and uniqueness for \eqref{sde0} with $b \in \mathbf{F}_\delta$ (cf.\,Remark \ref{rem_sdes} below), albeit potentially excluding a measure zero set of starting points $x \in \mathbb R^d$. 
For results on strong existence and uniqueness 
for any $x \in \mathbb R^d$, with $b$ satisfying (in the time-independent case) $|b| \in L^p+L^\infty$ with $p>d$ or $p=d$, see \cite{Kr1, Kr2, KrR}.

\medskip

We conclude this introduction with 
a few remarks concerning the criticality 
of the singularities of form-bounded drifts.

\smallskip

1.~In \cite[Sect.\,7]{BFGM}, the authors show that the SDE \eqref{sde0_} with drift $b(x)=\beta |x|^{-2}x$ and starting point $x=0$ does not have a weak solution if $\beta>d-2$. 
In view of Example 2 above, 
this drift $b$ belongs to $\mathbf{F}_\delta$ with $\sqrt{\delta}=\beta \frac{2}{d-2}$, so by the result of \cite{KiS2} cited above, the weak solution to \eqref{sde0_} with $x=0$ exists as long as $\beta>0$ satisfies $\beta < \frac{1}{2}$ if $d=3$, $\beta<1$ if $d \geq 4$ (in fact, for $d \geq 5$ it suffices to require $\beta<\frac{d-3}{2}$ using \cite[Corollary 4.10]{KiS3}). Thus, the weak well-posedness of \eqref{sde0_} is sensitive to changes in 
the value of the constant multiple $\beta$ of $b$ (equivalently, changes in the value of the relative bound $\delta$). 
In this sense, the singularities of $b \in \mathbf{F}_\delta$ are critical.

Let us note that the diffusion process with  drift $b(x)=c |x|^{-2}x$, $c \in \mathbb R$, was studied earlier in \cite{W}.

\smallskip

2. Let $b \in \mathbf{F}_\delta$. There is a quantitative dependence between the value of the relative bound $\delta$ and the regularity properties of solutions to the corresponding equations (PDEs or STEs). Indeed, the admissible values of $q$ in \eqref{reg}, as well as in \eqref{reg0}, depend on the value of $\delta$. This dependence is lost if one considers $b$ with $|b| \in L^d + L^\infty$ since any such $b$ has arbitrarily small relative bound, cf.\,Example 1.

\smallskip

3. Concerning the difference between classes $\mathbf{F}_\delta$ and its subclass $L^d+L^\infty$, let us also note the following: if $v$ is a weak solution of the elliptic equation $(\lambda -\Delta  + b \cdot \nabla )v=f$, $\lambda>0$, $f \in C_c^\infty$ with $|b| \in L^d+L^\infty$ and $v \in W^{1,r}$ for $r$ large (e.g.\,by \eqref{reg}), then, by H\"{o}lder's inequality,
$$
\Delta v \in L^{\frac{rd}{d+r}}_{\loc}. 
$$ 
However, for $b \in \mathbf{F}_\delta$, one can only say that (cf.\,Example 5 above)  $$\Delta v \in L^{\frac{2d}{d+2}}_{\loc} $$ (one can in fact show that $v \in W^{2,2}$). 
That is, in case $b \in \mathbf{F}_\delta$, there are no $W^{2,p}$ estimates on solution $v$ for $p$ large.

See \cite{KiS3} for detailed discussions of 
remarks 2 and 3 above.

\subsection*{Notations}
Denote 
$$
\langle f,g\rangle = \langle f g\rangle :=\int_{\mathbb R^d}f gdx
$$ 
(all functions considered below are assumed to be real-valued).

Set
\begin{align*}
\rho(x)  \equiv \rho_{\kappa, \theta}(x) 
:=(1+\kappa |x|^2)^{-\theta}, \quad \kappa>0,  \quad \theta>\frac{d}{2}, \quad x \in \mathbb R^d.
\end{align*}
It is easily seen that 
\begin{equation}
\label{two_est}
|\nabla \rho(x)| \leq \theta\sqrt{\kappa}\rho(x), \quad x \in \mathbb R^d.
\end{equation}
Below we will be applying \eqref{two_est} to $\rho$ with $\kappa$ chosen sufficiently small.

For any $p>1$, we use $p'$ to denote its conjugate $p/(p-1)$. Let $L^p_\rho \equiv L^p(\mathbb R^d,\rho dx)$. 
Denote by $\|\cdot\|_{p,\rho}$ the norm in $L^p_\rho$, and by $\langle \cdot ,\cdot \rangle_\rho$ the inner product in $L^2_\rho$.

Set $W^{1,2}_\rho:=
\{g \in W^{1,2}_{\loc} \mid 
\|g\|_{W^{1,2}_\rho}:=\|g\|_{2,\rho}+\|\nabla g\|_{2,\rho}<\infty\}$.

Define constants $$\beta_{2q}:=1+4qd, \quad q=1,2,\dots$$

Put $J_T := [0,T]$. 

\section{Main results}

Below we consider the Cauchy problem for the STE
\begin{equation}
\label{eq2}
\tag{CP}
\begin{array}{c}
du  + \mu\, u dt  +b \cdot\nabla u dt+ \sigma \nabla u \circ dB_t=0 \quad \text{ on } (0,\infty) \times \mathbb R^d, \\[3mm]
u|_{t=0} = f \in L^p, \quad p \geq 2,
\end{array}
\end{equation}
where $\mu \geq 0$. Since solutions of the Cauchy problems \eqref{eq1} and \eqref{eq2} will differ by a multiple $e^{-\mu t}$, it suffices to prove the well-posedness of \eqref{eq2}.

Let us first make a few preliminary remarks.

1.~We can rewrite the equation in \eqref{eq2}, using the identity relating Stratonovich and It\^{o} integrals
\begin{equation}
\label{strat_id}
\int_0^t \nabla u \circ dB_s=\int_0^t \nabla u dB_s-\frac{1}{2}\sum_{k=1}^d[\partial_{x_k}u,B^k]_t, \qquad B_t=(B_t^k)_{k=1}^d,
\end{equation}
as
\begin{equation}
\label{eq_ito}
du + \mu u dt+b \cdot\nabla u dt + \sigma \nabla u  dB_t-\frac{\sigma^2}{2}\Delta u=0.
\end{equation}

\smallskip

2.~If 
$b \in C_c^\infty(\mathbb R^d,\mathbb R^d)$ and 
$f \in C_c^\infty$, 
then (see 
\cite[Theorem 6.1.9]{Ku}) 
there exists a unique adapted strong solution of \eqref{eq2}
\begin{equation*}
u(t) - f+ \mu \int_0^t u ds +\int_0^t b \cdot \nabla uds + \sigma\int_0^t \nabla u \circ dB_s=0 \text{ a.s.},
 \quad t \in J_T,
\end{equation*}
given by
\begin{equation}
e^{-\mu t }u(t)=f(\Psi_{t}^{-1}), \quad t \geqslant 0,
\end{equation}
where $\Psi_{t}:\mathbb R^d \times \Omega \rightarrow \mathbb R^d$ is the stochastic flow for the SDE 
\begin{equation}
\label{sde}
X_t=x-\int_0^t b(X_r)dr+\sigma B_t,
\end{equation}
i.e.~there exists $\Omega_0 \subset \Omega$, $\mathbb P(\Omega_0)=1$, such that, for all $\omega \in \Omega_0$,
$\Psi_{t}(\cdot,\omega) \Psi_{s}(\cdot,\omega) = \Psi_{t+s}(\cdot,\omega)$, $\Psi_{0}(x,\omega)=x$, and

1) for every $x \in \mathbb R^d$, the process $t \mapsto \Psi_{t}(x,\omega)$ is a strong solution of \eqref{sde},

2) $\Psi_{t}(x,\omega)$ is continuous in $(t,x)$, $\Psi_{t}(\cdot,\omega):\mathbb R^d \rightarrow \mathbb R^d$ are homeomorphisms,  and $\Psi_{t}(\cdot,\omega)$, $\Psi_{t}^{-1}(\cdot,\omega) \in C^\infty(\mathbb R^d,\mathbb R^d)$.

\medskip

We first state our basic existence result. Recall that $b \in \mathbf{F}_\delta$ if 
$$
\| b\varphi\|_2^2 \leq \delta \| \nabla\varphi\|_2^2 + c_\delta \|\varphi\|_2^2, \quad \varphi \in W^{1,2},
$$
for some constant $c_\delta \geq 0$.

\begin{theorem}
\label{thm1}
Assume that $d \geq 3$, $b \in \mathbf{F}_\delta$ with
$\sqrt{\delta}<\frac{\sigma^2}{2\beta_2}$.
Let $T>0$, $p \geq 2$. 
Provided that $\kappa$ is chosen sufficiently small, there are constants
$\mu_1\big(\delta,c_\delta,p\big) \geq 0$, $C_1=C_1(\delta,c_\delta,p)>0$ and
$C_2=C_2(\delta,c_\delta,p,T)>0$ such that
for any $\mu \geq \mu_1\big(\delta,c_\delta,p\big)$,  
for every $f \in L^{2p}$
there exists a function $u \in L^\infty(J_T, L^2(\Omega,L^2_\rho))$ 
for which the following are true.

{\rm(\textit{i})}
\begin{equation}
\label{grad_reg}
\sup_{t \in J_T}\|\mathbb E u^2(t)\|_p \leq \|f\|^2_{2p}, \quad 
\int_{J_T}\|\nabla v_p\|_2^2ds \leq C_1\|f\|_{2p}^p,
\end{equation}
\begin{equation}
\label{grad_reg_}
\mathbb E\bigl\langle \rho \big|\nabla \int_{J_T}u ds\big|^2 \bigr\rangle \leq 
C_2\|f\|_{2p}^2,
\end{equation}
where $v:=\mathbb E u^2$ and $v_p:=v|v|^{\frac{p}{2}-1}$, so, in particular, for a.e.~$\omega \in \Omega$, $\nabla \int_0^T u(s,\cdot,\omega)ds \in L_{\loc}^2(\mathbb R^d, \mathbb R^d)$ and hence $$b \cdot \nabla \int_{J_T} u(s,\cdot,\omega)ds \in L_{\loc}^1,$$ and, for every test function 
$\varphi \in C_c^\infty$, 
we have a.s.\,for all $t \in J_T$,
\begin{align}
& \langle u(t),\varphi \rangle - \langle f,\varphi\rangle  \notag \\
& + \mu\langle \int_0^t u ds,\varphi \rangle + \bigl\langle b \cdot \nabla \int_0^t  uds,\varphi \bigr\rangle - \sigma \bigl\langle \int_0^t u dB_s,\nabla \varphi \bigr\rangle + \frac{\sigma^2}{2}\bigl\langle \nabla \int_0^t u ds, \nabla \varphi\bigr\rangle=0.   \label{eq__}
\end{align}

{\rm(\textit{ii})} For any sequence of smooth vector fields $b_m \in C_c^\infty(\mathbb R^d,\mathbb R^d)$, $m=1,2,\dots,$ that are uniformly form-bounded in the sense that $b_m \in \mathbf{F}_\delta$ with $c_\delta$ independent of $m$, and are such that 
$$b_m \rightarrow b \text{ in $L^2_{\loc}(\mathbb R^d,\mathbb R^d)$ as $m \rightarrow \infty$},$$
we have for initial functions $f \in C_c^\infty$,
\begin{equation*}
u_m(t) \rightarrow u(t) \quad \text{ in } L^2(\Omega,L_\rho^2) \quad \text{ uniformly in $t \in J_T$},
\end{equation*}
where $u_m$ is the unique strong solution to \eqref{eq2} {\rm(}with $b=b_m${\rm)}.
\end{theorem}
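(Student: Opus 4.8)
The plan is to construct $u$ as a limit of the smooth strong solutions $u_m$ attached to an approximating sequence, after first establishing \eqref{grad_reg}--\eqref{grad_reg_} uniformly at the level of the approximations. Fix $b_m\in C^\infty_c(\mathbb R^d,\mathbb R^d)$ as in (ii) (uniformly in $\mathbf F_\delta$, with $b_m\to b$ in $L^2_{\loc}$) and, for $f\in C^\infty_c$, let $u_m(t)=e^{-\mu t}f\big((\Psi^m_t)^{-1}\big)$ be the strong solution from the second preliminary remark; the passage to general $f\in L^{2p}$ is done at the end by linearity and the contraction estimate below. Applying It\^o's formula to $u_m^2$ and using $2w\Delta w=\Delta(w^2)-2|\nabla w|^2$, the gradient-squared contributions cancel and one checks that $v_m:=\mathbb E u_m^2\ge 0$ solves the \emph{deterministic} linear parabolic Cauchy problem $(\partial_t-\tfrac{\sigma^2}{2}\Delta+b_m\cdot\nabla+2\mu)v_m=0$, $v_m|_{t=0}=f^2$. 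All the unweighted information in \eqref{grad_reg} is read off from this equation.

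For \eqref{grad_reg} I would test the $v_m$-equation with $v_m^{p-1}$ (replacing $v_m$ by $v_m+\varepsilon$ and letting $\varepsilon\downarrow0$ if needed): with $v_{m,p}:=v_m^{p/2}$,
\[
\tfrac1p\tfrac{d}{dt}\|v_m\|_p^p+\tfrac{2\sigma^2(p-1)}{p^2}\|\nabla v_{m,p}\|_2^2
=-\tfrac2p\langle b_m v_{m,p}\cdot\nabla v_{m,p}\rangle-2\mu\|v_m\|_p^p ,
\]
and the drift term is estimated by $\|b_m v_{m,p}\|_2\le\sqrt\delta\,\|\nabla v_{m,p}\|_2+\sqrt{c_\delta}\,\|v_{m,p}\|_2$; the smallness hypothesis on $\sqrt\delta$ lets one absorb a fixed fraction of $\|\nabla v_{m,p}\|_2^2$ into the left side, and the choice $\mu\ge\mu_1(\delta,c_\delta,p)$ kills the remaining zeroth-order term. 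Integrating in $t$ yields both assertions of \eqref{grad_reg}. In particular $\mathbb E\|\sqrt\rho\,u_m(t)\|_2^2=\langle\rho\,v_m(t)\rangle\le\|\rho\|_{p'}\,\|f\|_{2p}^2$ uniformly in $m$ and $t\in J_T$ (using $\theta>d/2$ so that $\rho\in L^{p'}$), which is all the zeroth-order control needed below.

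The weighted gradient bound \eqref{grad_reg_} is the heart of the matter, and here one cannot differentiate the equation since $\nabla b_m$ is not controlled uniformly in $m$. The key is that $b$ is \emph{time-independent}: integrating \eqref{eq_ito} in time and using $\int_0^T b_m\cdot\nabla u_m\,ds=b_m\cdot\nabla U_{m,T}$ with $U_{m,T}:=\int_0^T u_m\,ds$, the running integral solves the \emph{elliptic} equation
\[
\big(\mu-\tfrac{\sigma^2}{2}\Delta+b_m\cdot\nabla\big)U_{m,T}=f-u_m(T)-\sigma M_{m,T},\qquad M_{m,T}:=\int_0^T\nabla u_m\cdot dB_s ,
\]
pathwise in $\omega$. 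Pairing this with $\rho\,U_{m,T}$ and taking $\mathbb E$ produces $\tfrac{\sigma^2}{2}\mathbb E\langle\rho|\nabla U_{m,T}|^2\rangle$ together with: a term carrying $\nabla\rho$, absorbed via $|\nabla\rho|\le\theta\sqrt\kappa\,\rho$ once $\kappa$ is small; the drift term $\mathbb E\langle b_m\cdot\nabla U_{m,T},\rho\,U_{m,T}\rangle$, absorbed through the \emph{weighted} form-bound $\|b_m\sqrt\rho\,U_{m,T}\|_2^2\le 2\delta\|\sqrt\rho\,\nabla U_{m,T}\|_2^2+(c_\delta+\tfrac{\delta\theta^2\kappa}{2})\|\sqrt\rho\,U_{m,T}\|_2^2$ — it is the combined bookkeeping of these absorptions (the factor $\theta>d/2$ entering through the weight) that makes the quantitative threshold $\sqrt\delta<\sigma^2/(2\beta_2)$ a convenient sufficient condition; the zeroth-order leftovers, estimated by $\mathbb E\|\sqrt\rho\,u_m(t)\|_2^2\le C\|f\|_{2p}^2$ and $\mathbb E\|\sqrt\rho\,U_{m,T}\|_2^2\le T^2C\|f\|_{2p}^2$; and, finally, the stochastic term $\sigma\mathbb E\langle M_{m,T},\rho\,U_{m,T}\rangle$. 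The last is the delicate one: one evaluates it by It\^o's product rule applied to $t\mapsto\langle M_{m,t},\rho\,U_{m,t}\rangle$ (here $\sigma\ne0$ is used) and bounds it by the already-controlled quantities. Getting this contribution to match the elliptic energy so that everything closes with a $T$-dependent constant is the step I expect to be the main obstacle.

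Finally, \eqref{grad_reg}--\eqref{grad_reg_} give uniform bounds for $\{u_m\}$ in $L^\infty(J_T,L^2(\Omega,L^2_\rho))$ and for $\{\nabla U_{m,t}\}$ in $L^2(\Omega,L^2_\rho)$. Writing the equation for the difference $u_m-u_k$ — it solves (CP) with drift $b_m$, damping $\mu$, zero initial data and, in integrated form, source $-(b_m-b_k)\cdot\nabla U_{k,t}$ — and running the same energy estimates used for \eqref{grad_reg_}, now fed by the \emph{uniform} bound \eqref{grad_reg_} on $\nabla U_{k,t}$, shows that $\{u_m\}$ is Cauchy in $L^\infty(J_T,L^2(\Omega,L^2_\rho))$ and $\{\nabla U_{m,t}\}$ is Cauchy in $L^2(\Omega,L^2_\rho)$; call the limits $u$ and $\nabla U_t$. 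Then $u$ inherits \eqref{grad_reg}--\eqref{grad_reg_} by passage to the limit, and one passes to the limit in \eqref{eq__} term by term — the only non-immediate term, $\langle b_m\cdot\nabla U_{m,t},\varphi\rangle=\langle\nabla U_{m,t},b_m\varphi\rangle$, converging because $\nabla U_{m,t}\to\nabla U_t$ strongly in $L^2(\Omega,L^2_{\loc})$ while $b_m\varphi\to b\varphi$ in $L^2$; in particular $b\cdot\nabla\int_0^t u\,ds\in L^1_{\loc}$. Since the argument applies to any admissible approximating sequence and the limit is pinned down by \eqref{eq__} and the estimates (or by the uniqueness proved in the sequel), $u$ does not depend on the sequence, which is statement (ii).
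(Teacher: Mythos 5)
Your derivation of \eqref{grad_reg} matches the paper's (It\^{o} on $u_m^2$, the deterministic parabolic equation for $v_m=\mathbb E u_m^2$, testing with $v_m|v_m|^{p-2}$ and absorbing the drift by form-boundedness), and your elliptic-pairing scheme for \eqref{grad_reg_} is also the paper's; the stochastic term you flag as "the main obstacle" is in fact unproblematic -- one writes $\int_0^t\nabla u_m\,dB_s=\nabla\int_0^t u_m\,dB_s$, integrates by parts to move the gradient onto $\rho U_{m,t}$, and applies Cauchy--Schwarz plus the It\^{o} isometry $\mathbb E\langle\rho(\int_0^t u_m dB_s)^2\rangle=\mathbb E\langle\rho\int_0^t u_m^2 ds\rangle$, after which a small multiple of $\mathbb E\langle\rho|\nabla U_{m,t}|^2\rangle$ is absorbed. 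The genuine gap is in your compactness step. Pairing the difference equation with $\rho\int_0^t(u_m-u_k)ds$ leaves you with the source $\mathbb E\langle\rho\,G_{m,k}\,(b_m-b_k)\cdot\nabla U_{k,t}\rangle$, $G_{m,k}:=\int_0^t(u_m-u_k)ds$, and you cannot make this vanish knowing only that $\nabla U_{k,t}$ and $G_{m,k}$ are bounded in $L^2(\Omega,L^2_\rho)$ and that $b_m-b_k\to0$ in $L^2_{\loc}$: the product of three merely-$L^2$ factors is not controlled, and form-boundedness of $b_m-b_k$ gives a bound of size $O(\sqrt\delta)$, not smallness. Moreover this time-integrated identity would at best control $U_m-U_k$, not $\sup_{t}\mathbb E\|u_m(t)-u_k(t)\|_{2,\rho}^2$.

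The paper closes this step differently, and the ingredient you have excluded is exactly the one needed. Your claim that "one cannot differentiate the equation since $\nabla b_m$ is not controlled uniformly in $m$" is the wrong turn: the paper differentiates the equation (Proposition \ref{apr_prop-2}), and the terms carrying $\partial_{x_r}b_m^k$ are integrated by parts back onto products of first derivatives of $u_m$ and then absorbed by form-boundedness of $b_m$ alone; the combinatorial cost of this is precisely the constant $\beta_{2}=1+4d$ in the hypothesis $\sqrt\delta<\sigma^2/(2\beta_2)$ (your attribution of $\beta_2$ to the weight $\rho$ is incorrect -- the weight costs only $O(\sqrt\kappa)$, killed by taking $\kappa$ small, and \eqref{grad_reg_} alone needs only $\sqrt\delta<\sigma^2/2$). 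This yields the pointwise-in-time bound $\sup_{t}\langle(\mathbb E|\nabla u_m|^2)^2\rangle\le c\|\nabla f\|_4^4$ uniformly in $m$. The paper then derives the parabolic equation for $h=\mathbb E[(u_n-u_m)^2]$, whose source is $2(b_n-b_m)\cdot\mathbb E[(u_n-u_m)\nabla u_m]$, tests with $\rho h$, and estimates the source by
$\langle|b_n-b_m|^2\rangle_\rho^{1/2}\langle h^6\rangle^{1/4}\langle(\mathbb E|\nabla u_m|^2)^2\rangle^{1/4}$,
using \eqref{e1} with $p$ large (to control $\langle h^6\rangle$ via $\|f\|_{12}$), the differentiated a priori estimate just described, and $\langle\rho|b_n-b_m|^2\rangle\to0$. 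Without the differentiated estimate your scheme has no uniform pointwise-in-time control on $\mathbb E|\nabla u_m|^2$ and the Cauchy argument does not close.
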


An example of such smooth approximating vector fields $\{b_m\}$ is given in the next section.

The next theorem establishes the Sobolev regularity of $u$  
up to the initial time $t=0$.

\begin{theorem}
\label{thm1_reg}
Assume that $d \geq 3$, $b \in \mathbf{F}_\delta$ with 
$\sqrt{\delta}<\frac{\sigma^2}{2\beta_2}$ and $f \in W^{1,4}$.  
Let $\kappa$ be sufficiently small and $\mu_1(\delta, c_\delta, 2)$ be the constant in Theorem \ref{thm1} with $p=2$. For $\mu\ge 
\mu_1(\delta, c_\delta, 2)$, let $u$ be the process constructed in Theorem \ref{thm1}. There exists $\mu_2(\delta, c_\delta)\ge \mu_1(\delta, c_\delta, 2)$
such that for $\mu\ge \mu_2(\delta, c_\delta)$,
the following are true.

{\rm(a)} $\mathbb Eu^2$, $\mathbb E|\nabla u|^2 \in
L^\infty(J_T,L^2)$, so $u \in L^\infty(J_T, 
%L^2(\Omega,W^{1,2}_\rho))$, 
L^2(\Omega,W^{1,2}_\rho))$;

{\rm(b)} 
for any test function $\varphi \in C_c^\infty$, 
the process
$
t \mapsto \langle u(t), \varphi\rangle
$
is $(\mathcal F_t)$-progressively measurable and has a continuous $(\mathcal F_t)$-semi-martingale modification that satisfies a.s.\,for every $t \in J_T$,
\begin{align}
& \langle u(t),\varphi \rangle - \langle f,\varphi\rangle \notag    \\
& + \mu\int_0^t \langle u,\varphi \rangle ds + \int_0^t \bigl \langle b \cdot \nabla  u,\varphi \bigr\rangle ds  - \sigma  \int_0^t \langle u,\nabla \varphi \rangle dB_s + \frac{\sigma^2}{2}\int_0^t \bigl\langle  u, \Delta \varphi\bigr\rangle ds =0 \label{eq___}.
\end{align}

Moreover, if $\sqrt{\delta}<\frac{\sigma^2}{2\beta_{2q}}$ for some  $q=1,2,\dots$, 
then there exists constants $\mu_2(\delta, c_\delta, q)\ge \mu_1(\delta, c_\delta, 2q)$ (with $\mu_2(\delta, c_\delta, 1)$ equal to the $\mu_2(\delta, c_\delta)$ above) and $C_1=C_1(\delta, c_\delta, q)>0$ such that when $\mu\ge \mu_2(\delta, c_\delta, q)$ and $f \in W^{1,4q}$, we have
\begin{equation}
\label{grad_reg2}
\sup_{0 \leq \alpha \leq 1}\bigl\|\mathbb E|\nabla u|^{2q}\bigr\|_{L^{\frac{2}{1-\alpha}}(J_T,L^{\frac{2d}{d-2+2\alpha}})} \leq 
C_1\|\nabla f\|^{2q}_{4q}.
\end{equation}
In particular,  there exists $C_2>0$ such that
\begin{equation}
\label{grad_reg3}
\sup_{t \in J_T}\mathbb E\langle \rho |\nabla u|^{2q}\rangle \leq  C_2\|\nabla f\|^{2q}_{4q}.
\end{equation}
If $2q>d$, then for a.e. $\omega \in \Omega$, $t \in J_T$, the function $x \mapsto u(t,x,\omega)$ is H\"{o}lder continuous, possibly after 
modification on a set of measure zero in $\mathbb R^d$ (in general, depending on $\omega$).
\end{theorem}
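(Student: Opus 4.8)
The plan is to prove the a priori bounds for a smooth, compactly supported drift and smooth initial data, with constants depending only on $d,q,\delta,c_\delta$ and the (fixed, small) $\kappa$, and then recover the general case by the approximation in Theorem \ref{thm1}(ii). Concretely, fix approximations $b_m\in C_c^\infty$ that are uniformly form-bounded with $b_m\to b$ in $L^2_{\loc}$, and $f_j\in C_c^\infty$ with $f_j\to f$ in $W^{1,4q}$; for the corresponding strong solutions $u_{m,j}(t)=e^{\mu t}f_j(\Psi_{t,m}^{-1})$ — smooth in $x$ with finite moments of all orders, by smoothness of the flow — I would establish \eqref{grad_reg2} uniformly in $m,j$, then let $j\to\infty$ (via the flow representation and $f_j\to f$) and $m\to\infty$ (via Theorem \ref{thm1}(ii)), lower semicontinuity of the norms passing the estimate to the process $u$ of Theorem \ref{thm1}; if a genuinely unweighted bound is wanted, let $\kappa\downarrow0$ at the end (constants uniform for small $\kappa$) and use $\rho_{\kappa,\theta}\uparrow1$. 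Granting \eqref{grad_reg2}, the rest is soft: \eqref{grad_reg3} is the $\alpha=1$ endpoint combined with Cauchy--Schwarz, $\mathbb E\langle\rho|\nabla u|^{2q}\rangle\le\|\rho\|_2\|\mathbb E|\nabla u|^{2q}\|_2$; when $2q>d$, \eqref{grad_reg3} and Fubini give $\nabla u(t,\cdot,\omega)\in L^{2q}_{\loc}$ for a.e.\,$(t,\omega)$, whence the H\"older representative by Morrey's embedding; and part (b) follows from the weak formulation \eqref{eq__} of Theorem \ref{thm1}, since the new regularity $\mathbb E|\nabla u|^{2}\in L^\infty(J_T,L^2)$ makes $b\cdot\nabla u(s,\cdot,\omega)\in L^1_{\loc}$ for a.e.\,$(s,\omega)$, so deterministic and stochastic Fubini identify $b\cdot\nabla\int_0^tu\,ds=\int_0^tb\cdot\nabla u\,ds$ and $\langle\int_0^tu\,dB_s,\nabla\varphi\rangle=\int_0^t\langle u,\nabla\varphi\rangle\,dB_s$ in \eqref{eq__}, yielding \eqref{eq___}, while progressive measurability and the continuous semimartingale modification come from continuity in $t$ of each term.

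For the a priori bound (smooth data), start from the It\^o form \eqref{eq_ito}; differentiating in $x_i$ and using $\partial_i(b\cdot\nabla u)=(\partial_ib)\cdot\nabla u+b\cdot\nabla\partial_iu$, the field $v_i:=\partial_iu$ solves an SPDE whose only zeroth-order term is $(\partial_ib)\cdot\nabla u$. Setting $w:=|\nabla u|^2$ and using It\^o's product rule together with the cancellations $\sum_i v_i\Delta v_i+\sum_i|\nabla v_i|^2=\tfrac12\Delta w$ and $2\sum_i v_i\,b\cdot\nabla v_i=b\cdot\nabla w$ gives
\begin{equation*}
dw+2\mu w\,dt+2\langle(Db)\nabla u,\nabla u\rangle\,dt+b\cdot\nabla w\,dt+2\sigma\sum_i v_i\nabla v_i\cdot dB_t-\tfrac{\sigma^2}{2}\Delta w\,dt=0,\qquad (Db)_{ij}:=\partial_i b_j .
\end{equation*}
Applying It\^o's formula to $w^q$ (for $q=1$ this is the display above), using $q w^{q-1}\Delta w+q(q-1)w^{q-2}|\nabla w|^2=\Delta(w^q)$ and $d[w,w]=\sigma^2|\nabla w|^2dt$, and taking expectations (the martingale has zero mean by the moment bounds on $\nabla u,\ D^2u$), the deterministic field $G:=\mathbb E\,w^q=\mathbb E|\nabla u|^{2q}$ satisfies the linear parabolic equation
\begin{equation*}
\partial_t G+2q\mu G+2q\,\mathbb E\big[w^{q-1}\langle(Db)\nabla u,\nabla u\rangle\big]+b\cdot\nabla G-\tfrac{\sigma^2}{2}\Delta G=0 ,
\end{equation*}
in which the only term not controlled by $b\in\mathbf F_\delta$ is the one containing $Db$.

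I would then run a parabolic $L^2_\rho$ energy estimate: multiply the equation for $G$ by $G\rho$ and integrate over $\mathbb R^d$. The Laplacian term produces the good Dirichlet term $\tfrac{\sigma^2}{2}\|\nabla G\|_{2,\rho}^2$, up to a lower-order contribution handled via $|\nabla\rho|\le\theta\sqrt\kappa\rho$ (see \eqref{two_est}). The drift term is controlled by form-boundedness applied to $\rho^{1/2}G$: $|\langle b\cdot\nabla G,G\rho\rangle|\le\||b|\rho^{1/2}G\|_2\|\nabla G\|_{2,\rho}$ with $\||b|\rho^{1/2}G\|_2^2\le\delta\|\nabla(\rho^{1/2}G)\|_2^2+c_\delta\|\rho^{1/2}G\|_2^2\le 2\delta\|\nabla G\|_{2,\rho}^2+C(\kappa,c_\delta)\|G\|_{2,\rho}^2$. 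The $Db$-term is treated by integrating by parts in $x_i$, converting $2q\int\mathbb E[w^{q-1}\langle(Db)\nabla u,\nabla u\rangle]G\rho$ into $-2q\int\sum_{i,j}b_j\,\partial_i\big(\mathbb E[w^{q-1}\partial_iu\,\partial_ju]\,G\rho\big)$; the terms in which $\partial_i$ falls on $G$ or $\rho$ are, after Cauchy--Schwarz, of the same $\||b|\rho^{1/2}(\cdots)\|_2\|\nabla G\|_{2,\rho}$ type and reabsorbed, while the terms in which $\partial_i$ falls on the $u$-factors generate $\mathbb E[(\Delta u)\partial_ju]$ and $\partial_jG$, which — re-expressing the former through \eqref{eq_ito} and again applying Cauchy--Schwarz and the form-bound — are dominated by $C(d,q)\sqrt\delta\,\|\nabla G\|_{2,\rho}^2$ plus lower order. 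Since the aggregate coefficient of all dangerous terms is proportional to $\sqrt\delta\,\beta_{2q}$ with $\beta_{2q}=1+4qd$, under $\sqrt\delta<\sigma^2/(2\beta_{2q})$ they are absorbed into $\tfrac{\sigma^2}{2}\|\nabla G\|_{2,\rho}^2$, and the residual zeroth-order terms (with coefficients depending on $c_\delta,\kappa$) are absorbed by $2q\mu G$ once $\mu\ge\mu_2(\delta,c_\delta,q)$. Integrating in $t\in J_T$ gives
\begin{equation*}
\sup_{t\in J_T}\|G(t)\|_{2,\rho}^2+\int_{J_T}\|\nabla G(t)\|_{2,\rho}^2\,dt\ \le\ C\,\big\|\,|\nabla f|^{2q}\big\|_{2,\rho}^2\ \le\ C\,\|\nabla f\|_{4q}^{4q},
\end{equation*}
and the full family of norms in \eqref{grad_reg2} follows from the Sobolev inequality $\|g\|_{2d/(d-2)}\le c\|\nabla g\|_2$ and interpolation between the $L^\infty_tL^2_x$ and $L^2_tL^{2d/(d-2)}_x$ endpoints.

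The main obstacle is precisely this absorption: one must track constants through the differentiation of the SPDE, the It\^o power rule, the integration by parts that removes $\nabla b$, and the two uses of the form-bound, and verify that \emph{every} remnant involving a second derivative of $u$ — in particular the $\mathbb E[(\Delta u)\partial_j u]$ contribution — is dominated by the single available Dirichlet term with total coefficient strictly below $\sigma^2/2$ exactly when $\sqrt\delta<\sigma^2/(2\beta_{2q})$; this is where the interplay between parabolic dissipation and form-boundedness is delicate and where the combinatorial constant $\beta_{2q}=1+4qd$ is produced. A secondary, routine difficulty is the ordering and bookkeeping of the limits ($f_j\to f$, then $b_m\to b$, then $\kappa\downarrow0$) and the justification that $\mathbb E$ annihilates the stochastic integrals at the smooth level, which rests on the finiteness of moments of $\nabla u_{m,j}$ and $D^2u_{m,j}$ guaranteed by smoothness of the flow.
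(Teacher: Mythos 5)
Your reduction of the theorem to an a priori estimate for smooth $b_m$, $f$ plus the approximation of Theorem \ref{thm1}(ii), and your derivation of \eqref{grad_reg3}, the H\"older continuity and part (b) from \eqref{grad_reg2}, all match the paper. The gap is in the core a priori estimate. The equation you write for $G=\mathbb E\,w^q=\mathbb E|\nabla u|^{2q}$ is not closed: the term $2q\,\mathbb E\bigl[w^{q-1}\langle (Db)\nabla u,\nabla u\rangle\bigr]$ involves the expectation of a random product that is not a function of $G$. After your integration by parts, the derivative $\partial_i$ falling on the $u$-factors produces quantities such as $\int b_j\,\mathbb E\bigl[w^{q-1}\,\partial_i\partial_j u\,\partial_i u\bigr]G\rho$, i.e.\ expectations of products of \emph{second} derivatives of $u$ with first derivatives. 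These cannot be absorbed into the single available Dirichlet term $\|\nabla G\|_{2,\rho}^2=\|\nabla\,\mathbb E[w^q]\|_{2,\rho}^2$: by Jensen, $|\nabla\,\mathbb E[w^q]|\le \mathbb E\,|\nabla(w^q)|$, so the deterministic gradient of $G$ is \emph{smaller} than the expected random gradient, and Cauchy--Schwarz on the dangerous term introduces $\mathbb E\bigl[w^{q-1}|D^2u|^2\bigr]$, a new quantity that appears nowhere else in your energy identity and is not dominated by $|\nabla G|^2/G$. So the claimed absorption "with total coefficient $\sqrt{\delta}\,\beta_{2q}$" does not go through in this form; this is exactly the obstacle you flag, and it is fatal rather than technical.

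The paper avoids this by choosing different unknowns: for each multiindex $I$ with $|I|=2q$ it sets $v_I:=\mathbb E\bigl[\prod_{r\in I}\partial_{x_r}u\bigr]$ (Proposition \ref{apr_prop-2}). Applying It\^{o}'s formula to the product $\prod_{r\in I}\partial_{x_r}u$ \emph{before} taking expectation yields, for the family $\{v_I\}_{|I|=2q}$, a genuinely closed linear parabolic system
\begin{equation*}
\partial_t v_I=-\mu v_I+\tfrac{\sigma^2}{2}\Delta v_I-b_m\cdot\nabla v_I-\sum_{r\in I}\sum_{k=1}^d\partial_{x_r}b^k_m\,v_{I-r+k},
\end{equation*}
in which the $\partial b_m$ term couples $v_I$ only to other members $v_{I-r+k}$ of the same family. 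Integrating the term $\langle v_I,\partial_{x_r}b_m^k\,v_{I-r+k}\rangle$ by parts then produces only first derivatives of the deterministic functions $v_I$, $v_{I-r+k}$, which are exactly the Dirichlet terms $\sum_J\|\nabla v_J\|_2^2$ present in the summed energy identity; counting shows each multiindex is hit $4qd$ times, whence $\beta_{2q}=1+4qd$ and the smallness condition $\sqrt{\delta}<\sigma^2/(2\beta_{2q})$. If you want to salvage your argument, replace $G$ by this family $\{v_I\}$ (and recover $\mathbb E|\nabla u|^{2q}$ from the diagonal components at the end, as in Step 3 of the paper); the rest of your outline then goes through.
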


\begin{theorem}
\label{thm2}
Assume that $d \geq 3$, $b \in \mathbf{F}_\delta$ with 
$\sqrt{\delta}<\frac{\sigma^2}{2\beta_2}$ and $f\in W^{1, 4}$. 
Provided $\kappa$ is sufficiently small, there exists $\mu_3=\mu_3(\delta, c_\delta)\ge 0$ such that for $\mu\ge \mu_3(\delta, c_\delta)$, 
\eqref{eq2}
has a unique solution in the class of functions satisfying 
{\rm(a)}, {\rm(b)} of Theorem \ref{thm1_reg}.
\end{theorem}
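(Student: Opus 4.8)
The plan is to reduce, using linearity of \eqref{eq2}, to showing that any $w$ satisfying {\rm(a)}, {\rm(b)} of Theorem~\ref{thm1_reg} with $f=0$ is identically zero; uniqueness in that class then follows at once. First I would establish the deterministic parabolic equation solved by $v:=\mathbb Ew^2$. With $\phi_\varepsilon$ a mollifier and $w^\varepsilon:=w(t)\ast\phi_\varepsilon$, testing {\rm(b)} against $\varphi=\phi_\varepsilon(x-\cdot)$ (and a stochastic Fubini argument) shows that for each $x$ the process $t\mapsto w^\varepsilon(t,x)$ is a continuous semimartingale with $dw^\varepsilon=-\mu w^\varepsilon\,dt-b\cdot\nabla w^\varepsilon\,dt+r^\varepsilon\,dt-\sigma\nabla w^\varepsilon\,dB_t+\tfrac{\sigma^2}{2}\Delta w^\varepsilon\,dt$, $w^\varepsilon|_{t=0}=0$, where $r^\varepsilon:=b\cdot\nabla w^\varepsilon-(b\cdot\nabla w)^\varepsilon$. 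Here the use of {\rm(a)} is crucial: since $w(t,\cdot,\omega)\in W^{1,2}_{\loc}$ for a.e.\ $(t,\omega)$, one has $b\cdot\nabla w^\varepsilon=b\cdot(\nabla w)^\varepsilon$ with $\nabla w\in L^2_{\loc}$ a genuine function, so $r^\varepsilon\to0$ in $L^1_{\loc}$ pathwise using only $|b|\in L^2_{\loc}$ and $L^2_{\loc}$-convergence of mollifications --- no Sobolev regularity of $b$ is needed, unlike in the standard DiPerna--Lions situation where the transported quantity lies only in $L^p$.

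Next, since $w^\varepsilon$ is smooth in $x$, applying It\^o's formula to $(w^\varepsilon(t,x))^2$ and using $2w^\varepsilon b\cdot\nabla w^\varepsilon=b\cdot\nabla((w^\varepsilon)^2)$ and $\sigma^2w^\varepsilon\Delta w^\varepsilon+\sigma^2|\nabla w^\varepsilon|^2=\tfrac{\sigma^2}{2}\Delta((w^\varepsilon)^2)$, then taking expectations (the martingale term vanishing), gives that $v^\varepsilon:=\mathbb E(w^\varepsilon)^2\ge0$ is a weak solution of $\partial_tv^\varepsilon+2\mu v^\varepsilon+b\cdot\nabla v^\varepsilon-\tfrac{\sigma^2}{2}\Delta v^\varepsilon=2\,\mathbb E[w^\varepsilon r^\varepsilon]$, $v^\varepsilon|_{t=0}=0$. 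I would then let $\varepsilon\to0$: the $L^2_\rho$-bounds in {\rm(a)} give $v^\varepsilon\to v:=\mathbb Ew^2$ in $L^1_{\loc}$ by dominated convergence; the bound $|\nabla v|\le2(\mathbb Ew^2)^{1/2}(\mathbb E|\nabla w|^2)^{1/2}$ together with {\rm(a)} gives $v\in L^\infty(J_T,W^{1,2}_\rho)$; and --- the delicate point --- $\mathbb E[w^\varepsilon r^\varepsilon]\to0$ in $\mathcal D'$. Passing to the limit identifies $v$ as a nonnegative weak solution, in $L^\infty(J_T,W^{1,2}_\rho)$, of the \emph{homogeneous} parabolic problem $\partial_tv+2\mu v+b\cdot\nabla v-\tfrac{\sigma^2}{2}\Delta v=0$, $v|_{t=0}=0$.

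For the conclusion I would test this equation against $v\rho$ and integrate: the diffusion term produces the dissipative contribution $\tfrac{\sigma^2}{2}\|\nabla v\|_{2,\rho}^2$; the drift term, estimated by applying $b\in\mathbf{F}_\delta$ to $v\sqrt\rho\in W^{1,2}$ and using \eqref{two_est}, satisfies $|\langle b\cdot\nabla v,v\rho\rangle|\le\sqrt{2\delta}\,\|\nabla v\|_{2,\rho}^2+C(\delta,c_\delta,\kappa)\|v\|_{2,\rho}^2$ with \emph{no} appeal to $\dv b$ (not available for form-bounded $b$); and the $\nabla\rho$ cross-term is absorbed by taking $\kappa$ small. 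Since $\sqrt\delta<\tfrac{\sigma^2}{2\beta_2}$ ($\beta_2=1+4d$) leaves a net negative coefficient of $\|\nabla v\|_{2,\rho}^2$ with ample room, one gets $\tfrac{d}{dt}\|v(t)\|_{2,\rho}^2\le C\|v(t)\|_{2,\rho}^2$, whence Gronwall and $v(0)=0$ force $v\equiv0$. Thus $\mathbb Ew(t)^2=0$ for a.e.\ $t\in J_T$, so $w(t)=0$ a.s.\ for a.e.\ $t$, and the continuous modification of $t\mapsto\langle w(t),\varphi\rangle$ from {\rm(b)} upgrades this to all $t$; hence $u_1=u_2$. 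Here $\mu_3(\delta,c_\delta)$ need only be the constant $\mu_2(\delta,c_\delta)$ of Theorem~\ref{thm1_reg} guaranteeing that the solution class is nonempty; the uniqueness argument imposes no further lower bound on $\mu$. (Alternatively, once $v$ is shown to be a weak solution of the homogeneous parabolic problem, one may quote uniqueness for that problem with form-bounded drift from the cited works.)

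The step I expect to be the main obstacle is the convergence $\mathbb E[w^\varepsilon r^\varepsilon]\to0$. This is the $L^2$-renormalization difficulty: the renormalizer $s\mapsto s^2$ has unbounded derivative, so one cannot pair the merely $L^1_{\loc}$-convergent $r^\varepsilon$ against the unbounded factor $w^\varepsilon$. I would handle it via the explicit formula $r^\varepsilon(x)=\int\big(b(x)-b(x-y)\big)\cdot\nabla w(x-y)\,\phi_\varepsilon(y)\,dy$, a Cauchy--Schwarz split, translation-invariance of form-boundedness (so $b(\cdot-y)\in\mathbf{F}_\delta$ with the same $\delta,c_\delta$ for all $|y|\le\varepsilon$), $L^2_{\loc}$-continuity of translation for $b$ (giving $\sup_{|y|\le\varepsilon}\|b-b(\cdot-y)\|_{L^2(K)}\to0$), and uniform integrability (Egorov) for the family $\{(w^\varepsilon)^2\}$ furnished by its $L^2_{\loc}$-convergence.
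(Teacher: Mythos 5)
Your route is essentially correct but genuinely different from the paper's. The paper follows the duality scheme of \cite[Sect.~3]{BFGM}: it conjugates the weak solution by the noise, $\tilde u^\omega(t,x)=u(t,x+\sigma B_t(\omega))$, so that $\tilde u^\omega$ solves a random transport equation; it then tests this against smooth solutions $\tilde v^\omega_m$ of the \emph{adjoint} (stochastic continuity) terminal-value problem with the mollified drifts $b_m$, proves a weighted $L^2$ a priori bound $\sup_t\|\rho^{-1}\mathbb E[v_m^2(t)]\|_2\le\|\rho^{-1}v_0\|_4^2$ for the adjoint solutions, and kills the duality error $\int_0^{t_*}\langle\nabla\tilde u^\omega,(\tilde b_m^\omega-\tilde b^\omega)\tilde v_m^\omega\rangle ds$ by Cauchy--Schwarz, the convergence $\langle\rho|b-b_m|^2\rangle\to0$, and the membership $\mathbb E|\nabla u|^2\in L^\infty(J_T,L^2)$ built into the solution class; a diagonal argument over rational $t_*$ and a countable dense set of terminal data $v_0$ finishes. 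You instead renormalize the solution directly: mollify, apply It\^{o} to $(w^\varepsilon)^2$, take expectations, and run a weighted Gronwall argument on $v=\mathbb Ew^2$. Both arguments hinge on exactly the same structural fact --- the $W^{1,2}$-regularity in part (a) of the solution class is what lets one avoid any hypothesis on $\dv b$ --- but they spend it differently: the paper spends it on the duality error term (against adjoint solutions that are merely $L^2$-controlled), while you spend it on the commutator $r^\varepsilon$. Your commutator estimate does close: writing $|\mathbb E[w^\varepsilon r^\varepsilon](x)|\le\int\phi_\varepsilon(y)|b(x)-b(x-y)|\,(\mathbb E(w^\varepsilon(x))^2)^{1/2}(\mathbb E|\nabla w(x-y)|^2)^{1/2}dy$ and using that $(\mathbb Ew^2)^{1/2},(\mathbb E|\nabla w|^2)^{1/2}\in L^\infty(J_T,L^4)$ by (a), one gets $\int_K|\mathbb E[w^\varepsilon r^\varepsilon]|\le C\sup_{|y|\le\varepsilon}\|b-b(\cdot-y)\|_{L^2(K')}\to0$; no Egorov or translation-invariance of form-boundedness is actually needed. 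What your approach buys is the elimination of the adjoint problem, its separate a priori estimate, and the diagonalization; what the paper's approach buys is that it never differentiates the weak solution pointwise in $x$ beyond what (a) literally provides, and it stays closer to the framework that also yields the stronger (no-derivative) uniqueness when $\dv b$ is controlled.

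Two technical points in your write-up still need filling in. First, when you take expectations in It\^{o}'s formula for $(w^\varepsilon(t,x))^2$, the stochastic integral $\int_0^t w^\varepsilon\nabla w^\varepsilon\,dB_s$ is a priori only a local martingale: its integrand requires a fourth moment $\mathbb E[(w^\varepsilon)^2|\nabla w^\varepsilon|^2]$ that the class (a)--(b) does not supply, so you must localize with stopping times and pass to the limit (the dominations needed for the Lebesgue-integral terms do follow from (a) and $|b|\in L^2_{\loc}$, and $\mathbb E\sup_{s\le t}(w^\varepsilon(s,x))^2<\infty$ follows from Doob applied to the $L^2$-martingale part). Second, the final energy identity requires justifying $v\rho$ as a test function for the limiting parabolic equation (Steklov averaging in $t$), which is standard given $v\in L^\infty(J_T,W^{1,2})$ but should be stated. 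Neither point is a gap in the idea; both are routine repairs.
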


A function satisfying (a), (b) of Theorem \ref{thm1_reg}
will be called a weak solution of \eqref{eq2}. This definition of weak solution is  close to \cite[Definition 2.13]{BFGM}. It should be noted however that the authors in \cite{BFGM} prove their uniqueness result, in the time-dependent case, in a larger class of weak solutions (not requiring any differentiability, see \cite[Definition 3.3]{BFGM}) but under additional assumptions on $b$. 
Specialized to the time-dependent case, they assume that $b$ satisfies
\begin{equation}
\label{div}
{\rm div\,}b \in L^d + L^\infty 
\end{equation}
in addition to $|b| \in L^d + L^\infty$.
The latter is needed to establish \eqref{grad_reg2} for solutions of the adjoint equation to the STE, i.e.\,the stochastic continuity equation (which allows to prove an even stronger 
%result, i.e.\,the uniqueness of 
%RS I do not like two i.e.'s in the same sentence
result: the uniqueness of 
weak solution to the corresponding  random transport equation),
see \cite[Sect.\,3]{BFGM}. 

We expect that an analogue of \eqref{div} for $b \in \mathbf{F}_\delta$ can be found with some additional effort. However, 
we will not address this matter in this paper. 
Of course, in the case $b \in \mathbf{F}_\delta$, ${\rm div\,}b=0$, one has \eqref{grad_reg2} for solutions to the stochastic continuity equation, so one can prove the uniqueness for \eqref{eq2} by repeating the argument in \cite[Sect.\,3]{BFGM}. 

The proof of the uniqueness result in Theorem \ref{thm2} (see Section \ref{unique_sect}) adopts the method of \cite[Sect.\,3]{BFGM}.

\begin{remark}[On applications to SDEs]
\label{rem_sdes}
Armed with Theorems \ref{thm1} and \ref{thm1_reg}, one can repeat the argument in \cite[Sect.\,4]{BFGM} to prove the following result.
Assuming that $b \in \mathbf{F}_\delta$ with $\delta$ sufficiently small,  
there exists a stochastic Lagrangian flow for SDE \eqref{sde}, i.e.\,a measurable map $\Phi:J_T \times \mathbb R^d \times \Omega \rightarrow \mathbb R^d$ such that, for a.e.~$x \in \mathbb R^d$, the process $t \mapsto \Phi_t(x,\omega)$ is a strong solution of the SDE \eqref{sde}:
\begin{equation}
\label{sde3}
\Phi_t(x,\omega)=x-\int_0^t b(s,\Phi_r(x,\omega))dr+\sigma B_t(\omega), \quad \text{a.s.}, \quad t \in J_T,
\end{equation}
and $\Phi_t(x,\cdot)$ is $\mathcal F_t$-progressively measurable.
If also $\sqrt{\delta}<\frac{\sigma^2}{2\beta_{2q}}$, $q=1,2,\dots$, then $\Phi_t(\cdot,\omega) \in W_{\loc}^{1,2q}$ ($t \in J_T$) for a.e. $\omega \in \Omega$. Moreover, $\Phi_t$ is unique, i.e.\,any two such stochastic flows coincide a.s. for every $t>0$ for a.e.\,$x$. 
\end{remark}

\begin{remark}[STE with time-dependent $b$]
\label{time_rem}
The proof of the key result of this paper (Proposition \ref{apr_prop-2} below, i.e.\,a priori Sobolev regularity of solutions of the STE) carries over, without change, to the time-dependent form-bounded vector fields: 

\begin{definition}
A vector field $b \in L^2_{\loc}\bigl([0,\infty) \times \mathbb R^d,\mathbb R^d\bigr)$  is said to be form-bounded with relative bound  
$\delta>0$, written as $b\in \widetilde{\mathbf{F}}_\delta$,
if $|b| \in L^2_{\loc}([0,\infty) \times \mathbb R^d)$ and
$$
\int_0^\infty \|b(t,\cdot)\phi(t,\cdot)\|_2^2 dt \leqslant \delta \int_0^\infty\|\nabla \phi(t,\cdot)\|_2^2 dt+\int_0^\infty g(t)\|\phi(t,\cdot)\|_2^2dt
$$
for some $g=g_\delta \in L^1_{\loc}[0,\infty)$, for all $\phi \in C_c^\infty([0,\infty)  \times \mathbb R^d)$.
\end{definition}

The class $\widetilde{\mathbf{F}}_\delta$ contains  vector fields
$$
|b(\cdot,\cdot)| \in L^q\bigl([0,\infty),L^r+L^\infty\bigr), \quad \frac{d}{r}+\frac{2}{q} \leqslant 1,
$$
with $\delta$ that can be chosen arbitrarily small 
(using H\"{o}lder's inequality and the Sobolev embedding theorem).
Another example is
$$
|b(t,x)|^2 \leqslant c_1|x-x_0|^{-2} + c_2|t-t_0|^{-1}\bigl(\log(e+|t-t_0|^{-1}) \bigr)^{-1-\varepsilon}, \quad \varepsilon>0, \quad (t,x) \in [0,\infty) \times \mathbb R^d,
$$
which belongs to the class  
$\widetilde{\mathbf{F}}_\delta$ with 
$\delta=c_1\left(2/(d-2) \right)^2$  (using Hardy's inequality). 

We plan to address the regularity theory of the STE with 
$b\in \widetilde{\mathbf{F}}_\delta$ elsewhere.
\end{remark}

\section{A priori estimates} 
\label{apr_sect}

Assume $b \in \mathbf{F}_\delta$.
In the remainder of this paper, we fix some  
$b_m \in C_c^\infty(\mathbb R^d, \mathbb R^d)$ such that 
$$b_m \rightarrow b \text{ in $L^2_{\loc}(\mathbb R^d,\mathbb R^d)$ as $m \rightarrow \infty$}$$
and for every $m=1,2,\dots$
$$
\| b_m\varphi\|_2^2 \leq \delta \| \nabla\varphi\|_2^2 + c_\delta \|\varphi\|_2^2, \quad \varphi \in W^{1,2} 
$$
with $c_\delta$ independent of $m$ (see example of such $b_m$ below).
Let $f \in C_c^\infty$. 
Let $u_m$ be the unique 
strong solution to
\begin{equation}
\label{eq5}
u_m(t) - f+ \mu \int_0^t u_m ds +\int_0^t b_m \cdot \nabla uds + \sigma\int_0^t \nabla u_m \circ dB_s=0 \text{ a.s.},
 \quad t \in J_T=[0,T].
\end{equation}
Then, by \cite[Section 6.1]{Ku}, 
for any $p, r\ge 1$ and any multiindex $\alpha=(\alpha_1, \dots, \alpha_d)$ of
non-negative integers,
$$
\mathbb E\left(\left|
D^\alpha u_m
\right|^p\right) \in L^\infty(J_T \times \mathbb R^d)
$$ 
and 
$$
\int_{\mathbb R^d}(1+|x|^r)\bigl( \mathbb E|u_m|^p + \mathbb E|\nabla u_m|^p\bigr) dx \in L^\infty(J_T).
$$

\begin{remark}[Example of $\{b_m\}$]
Denote by $\mathbf{1}_m$ the indicator of 
$\{|x| \leq m, |b(x)| \leq m\}$, and by $\eta_m \in C_c^\infty$ a $[0,1]$-valued function such that $\eta_m = 1 $ on $B(0,m)$. Consider
\begin{equation}
\label{b_m_def}
\tag{$\ast$}
b_m:=\eta_m e^{\epsilon_m\Delta}(\mathbf{1}_m b),
\end{equation}
where $\epsilon_m \downarrow 0$ is to be chosen. 

First, let us show  that, for any $\{\gamma_m\} \downarrow 0$ we can select $\{\epsilon_m\} \downarrow 0$ in the definition of $b_m$ so that 
$$
b_m \in \mathbf{F}_{\delta_m} \quad \text{ with $\delta_m=(\sqrt{\delta}+\sqrt{\gamma_m})^2 \downarrow \delta$ and $c_{\delta_m} \leq 2c_\delta$  
starting from some $m$ on}. 
$$
Since $b \in \mathbf{F}_\delta$, there exists $\lambda\ge 0$ such that $\||b|(\lambda-\Delta)^{-\frac{1}{2}}\|_{2 \rightarrow 2} \leq \sqrt{\delta}$. Then $c_\delta=\lambda\delta$.
We claim that, we can select $\{\epsilon_m\} \downarrow 0$ fast enough so that
\begin{equation}
\label{b_m_cond}
\tag{$\ast\ast$}
\||b_m|(\lambda-\Delta)^{-\frac{1}{2}}\|_{2 \rightarrow 2} 
\leq \sqrt{\delta_m}.
\end{equation}
Once this claim is proven, we will have $c_{\delta_m}=\lambda \delta_m \leq 2c_\delta$ starting from some $m$ on, which implies the required. Now we prove the claim.
We have $$b_m=\mathbf{1}_m b + (b_m-\mathbf{1}_m b),$$ where, 
clearly, $\||\mathbf{1}_mb|(\lambda-\Delta)^{-\frac{1}{2}}\|_{2 \rightarrow 2} \leq \sqrt{\delta}$ for every $m$, while 
$b_m-\mathbf{1}_m b \in L^d$. 
It follows from H\"{o}lder's inequality and the Sobolev embedding theorem that
for any $g \in L^2$,
\begin{align*}
\||b_m-\mathbf{1}_m b|(\lambda-\Delta)^{-\frac{1}{2}}g\|_2 & \leq \|b_m-\mathbf{1}_m b\|_d \|(\lambda-\Delta)^{-\frac{1}{2}}g\|_{\frac{2d}{d-2}} \leq c\|b_m-\mathbf{1}_m b\|_d \|g\|_2.
\end{align*}
It is easily seen that, for every $m$,
the norm $\|b_m-\mathbf{1}_m b\|_d$ can be made smaller than  $c^{-1}\gamma_m$  by selecting $\{\epsilon_m\} \downarrow 0$ sufficiently rapidly.
Thus
$$
\|(b_m-\mathbf{1}_m b)(\lambda-\Delta)^{-\frac{1}{2}}\|_{2 \rightarrow 2} \leq \gamma_m.
$$
Now \eqref{b_m_cond} follows.

Finally, to have $b_m$ form-bounded with the original relative bound $\delta$, it suffices to multiply $b_m$ in \eqref{b_m_def} by $\frac{\delta}{\delta_m}$. (Although, to carry 
out the proofs of Theorems \ref{thm1}-\ref{thm2},
the last step is not necessary since all our assumptions on $\delta$ are strict inequalities.)
\end{remark}

We prove the next proposition under 
more general assumptions on $\delta$ and $p$ than in Theorem \ref{thm1}.

\begin{proposition}
\label{apr_prop}
Let $b \in \mathbf{F}_\delta$ with $\sqrt{\delta}<\sigma^2$. Let $T>0$, $p \in (p_c,\infty)$, $p_c:=\big(1-\frac{\sqrt{\delta}}{\sigma^2}\big)^{-1}$. 
Let $f \in C_c^\infty$, let $b_m$ and $u_m$ be as above.
There exist constants 
$\mu(\delta,c_\delta,p) \geq 0$, $C_1=C_1(\delta,c_\delta,p)>0$ and
$C_2=C_2(\delta,c_\delta,p,T)>0$ independent of $m$
such that  for any 
$\mu\ge \mu\big(\delta, c_\delta,p\big)$ and $m=1, 2,\dots$, the following are true:

{\rm(\textit{i})} 
\begin{equation}
\label{e1}
\tag{$E_1$}
\sup_{t \in J_T}\|\mathbb Eu_m^2(t)\|_p \leq \|f\|^2_{2p}, \quad \int_{J_T}\|\nabla v_p\|_2^2ds \leq 
C_1\|f\|_{2p}^p,
\end{equation}
where $v:=\mathbb E u^2$ and $v_p:=v|v|^{\frac{p}{2}-1}$;

{\rm(\textit{ii})} if $\sqrt{\delta}<\frac{\sigma^2}{2}$, then
\begin{equation}
\label{e2}
\tag{$E_2$}
 \mathbb E\bigl\langle \rho \left(\nabla \int_{J_T} u_m(s) ds\right)^2 \bigr\rangle 
 \leq C_2\|f\|_{2p}^2.
\end{equation}
\end{proposition}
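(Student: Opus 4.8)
\emph{The plan} is to base everything on the observation that the second moment $v_m:=\mathbb E u_m^2$ solves a \emph{deterministic} parabolic equation. Applying It\^o's formula to $u_m^2$ via \eqref{eq5} (equivalently \eqref{eq_ito}) gives $d(u_m^2)=2u_m\,du_m+\sigma^2|\nabla u_m|^2\,dt$; the quadratic‑variation term $\sigma^2|\nabla u_m|^2\,dt$ combines with $\sigma^2 u_m\Delta u_m\,dt$ into $\tfrac{\sigma^2}{2}\Delta(u_m^2)\,dt$, while $2u_m\,b_m\!\cdot\!\nabla u_m=b_m\!\cdot\!\nabla(u_m^2)$, so after taking expectations (the martingale term drops by the integrability of $u_m$ and its derivatives) the nonnegative function $v_m$ solves
\begin{equation*}
\partial_t v_m+2\mu v_m+b_m\!\cdot\!\nabla v_m-\tfrac{\sigma^2}{2}\Delta v_m=0,\qquad v_m|_{t=0}=f^2 ,
\end{equation*}
whose drift is form‑bounded with the same $\delta,c_\delta$ as $b$, uniformly in $m$.

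\emph{Proof of \eqref{e1}.} This is the $L^p$ energy estimate for the above equation. Testing against $p\,v_m^{p-1}$ (admissible since $v_m\ge 0$ and $u_m,\nabla u_m$ decay), the diffusion term gives $\tfrac{2\sigma^2(p-1)}{p}\|\nabla v_m^{p/2}\|_2^2$, and the drift term equals $2\langle b_m v_m^{p/2},\nabla v_m^{p/2}\rangle$, which by $b_m\in\mathbf F_\delta$ (with $\varphi=v_m^{p/2}$), Cauchy--Schwarz and Young is at most $(2\sqrt\delta+\varepsilon)\|\nabla v_m^{p/2}\|_2^2+c_\delta\varepsilon^{-1}\|v_m\|_p^p$. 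The hypothesis $p>p_c$ is precisely $\tfrac{2\sigma^2(p-1)}{p}>2\sqrt\delta$, so $\varepsilon$ can be chosen with the two gradient terms combining into $2\gamma\|\nabla v_m^{p/2}\|_2^2$, $\gamma>0$; taking $\mu\ge\mu(\delta,c_\delta,p)$ large enough to dominate $c_\delta\varepsilon^{-1}$ gives $\tfrac{d}{dt}\|v_m\|_p^p+2\gamma\|\nabla v_m^{p/2}\|_2^2\le 0$. Integrating in $t$ and using $\|v_m(0)\|_p=\|f\|_{2p}^2$ yields both inequalities in \eqref{e1}, with constants independent of $m$.

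\emph{Proof of \eqref{e2}.} Set $y_m(t):=\int_0^t u_m(s)\,ds$ and $N_m(t):=\int_0^t\nabla u_m\cdot dB_s$. Integrating \eqref{eq_ito} over $[0,t]$ and using that $b_m$ is time‑independent, $y_m(t)$ solves for a.e.\ $\omega$ the random elliptic equation $-\tfrac{\sigma^2}{2}\Delta y_m(t)+\mu y_m(t)+b_m\!\cdot\!\nabla y_m(t)=f-u_m(t)-\sigma N_m(t)$. Pair this with $\rho\,y_m(t)$ and take expectations. Using \eqref{two_est} and $|\Delta\rho|\le C\kappa\rho$, the diffusion term contributes $\tfrac{\sigma^2}{2}\mathbb E\|\nabla y_m(t)\|_{2,\rho}^2$ up to $O(\kappa)\mathbb E\|y_m(t)\|_{2,\rho}^2$, and the $\mu$‑term gives $\mu\,\mathbb E\|y_m(t)\|_{2,\rho}^2$. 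For the drift one first records the weighted form‑bound $\|b_m\varphi\|_{2,\rho}^2\le\delta\|\nabla\varphi\|_{2,\rho}^2+(c_\delta+C\delta\kappa)\|\varphi\|_{2,\rho}^2$, obtained from $b_m\in\mathbf F_\delta$ by expanding $\nabla(\rho^{1/2}\varphi)$ and using \eqref{two_est}; then $|\mathbb E\langle b_m\!\cdot\!\nabla y_m,\rho y_m\rangle|\le(\sqrt\delta+\varepsilon)\mathbb E\|\nabla y_m(t)\|_{2,\rho}^2+C_\varepsilon\mathbb E\|y_m(t)\|_{2,\rho}^2$. Since $\sqrt\delta<\tfrac{\sigma^2}{2}$ --- exactly the hypothesis of part (ii) --- this is absorbed (for $\varepsilon$ small), leaving a strictly positive multiple of $\mathbb E\|\nabla y_m(t)\|_{2,\rho}^2$ on the left. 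The terms $\mathbb E\langle f,\rho y_m\rangle$ and $\mathbb E\langle u_m(t),\rho y_m\rangle$ are handled by Young; by \eqref{e1}, $\mathbb E\|u_m(t)\|_{2,\rho}^2=\langle\rho v_m(t)\rangle\le\|\rho\|_{p'}\|f\|_{2p}^2$ (note $\rho\in L^{p'}$ as $\theta>\tfrac{d}{2}$), and by Cauchy--Schwarz in $s$ together with \eqref{e1}, $\mathbb E\|y_m(t)\|_{2,\rho}^2\le T^2\|\rho\|_{p'}\|f\|_{2p}^2$, so every $\mathbb E\|y_m(t)\|_{2,\rho}^2$ contribution is absorbed (for $\mu$ large) or bounded by $\|f\|_{2p}^2$.

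\emph{The point that needs care} is the martingale term $-\sigma\mathbb E\langle N_m(t),\rho\,y_m(t)\rangle$. A Cauchy--Schwarz bound is useless: by It\^o's isometry $\mathbb E\|N_m(t)\|_{2,\rho}^2=\int_0^t\mathbb E\|\nabla u_m(s)\|_{2,\rho}^2\,ds$, and no bound uniform in $m$ is available for $\int_0^t\mathbb E\|\nabla u_m\|_{2,\rho}^2\,ds$ --- the weighted energy identity for $\langle\rho u_m^2\rangle$ loses the $|\nabla u_m|^2$‑term through the exact cancellation of $\sigma^2\langle\rho u_m\Delta u_m\rangle$ against the quadratic variation $\sigma^2\langle\rho|\nabla u_m|^2\rangle$, which is the transport character of the equation. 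The way around it is to write $N_m(t)=\dv Z_m(t)$ with $Z_m(t):=\int_0^t u_m(s)\,dB_s$ --- a stochastic integral of $u_m$ \emph{without} derivatives --- so that, integrating by parts in $x$,
\begin{equation*}
-\sigma\,\mathbb E\langle N_m(t),\rho\,y_m(t)\rangle=\sigma\,\mathbb E\langle Z_m(t),\rho\,\nabla y_m(t)\rangle+\sigma\,\mathbb E\langle Z_m(t),\,y_m(t)\nabla\rho\rangle ,
\end{equation*}
and now It\^o's isometry gives $\mathbb E\|Z_m(t)\|_{2,\rho}^2=d\int_0^t\langle\rho v_m(s)\rangle\,ds\le d\,t\,\|\rho\|_{p'}\|f\|_{2p}^2$, controlled by \eqref{e1}. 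Hence the first term on the right is $\le\eta\,\mathbb E\|\nabla y_m(t)\|_{2,\rho}^2+C_\eta\|f\|_{2p}^2$, absorbed into the positive multiple of $\mathbb E\|\nabla y_m(t)\|_{2,\rho}^2$ for $\eta$ small (there is room to spare since $\sqrt\delta<\tfrac{\sigma^2}{2}$), while the second, being $O(\sqrt\kappa)$, is dominated by $\|f\|_{2p}^2$. Collecting all terms gives $\sup_{t\in J_T}\mathbb E\|\nabla y_m(t)\|_{2,\rho}^2\le C_2(\delta,c_\delta,p,T)\|f\|_{2p}^2$ uniformly in $m$; taking $t=T$ is \eqref{e2}.
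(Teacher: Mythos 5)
Your proof is correct and follows essentially the same route as the paper: the deterministic parabolic equation for $\mathbb E u_m^2$ with an $L^p$ energy estimate (the condition $p>p_c$ entering exactly as $\tfrac{2\sigma^2}{p'}>2\sqrt{\delta}$) for \eqref{e1}, and for \eqref{e2} testing the time-integrated equation against $\rho\int_0^t u_m\,ds$ and defusing the martingale term by commuting $\nabla$ with the stochastic integral and integrating by parts so that It\^{o}'s isometry applies to $\int_0^t u_m\,dB_s$ rather than to $\int_0^t \nabla u_m\,dB_s$. The only nitpick is that your weighted form-bound should carry a $1+O(\sqrt{\kappa})$ (or $1+\epsilon$) factor on the gradient term coming from expanding $\nabla(\sqrt{\rho}\,\varphi)$; this is harmless because the hypothesis $\sqrt{\delta}<\tfrac{\sigma^2}{2}$ is strict and $\kappa$ is taken small, which is precisely how the paper absorbs it.
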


\begin{proposition}
\label{apr_prop-2}
%Let $b \in \mathbf{F}_\delta$. Let 
%$f \in C_c^\infty$, let $b_m$ and $u_m$ be as 
%above.
Let $b \in \mathbf{F}_\delta$ and $f \in C_c^\infty$, let $b_m$ and $u_m$ be as 
above.
For every $q \geq 1$,  there exists constants 
$\mu(\delta,c_\delta,q) \geq 0$  and $C=C(\delta,c_\delta,q)>0$
independent of $m$
such that if 
$\sqrt{\delta}<\frac{\sigma^2}{2\beta_{2q}}$ and $\mu\ge 
\mu(\delta, c_\delta, q)$,
then 
\begin{equation}
\label{apr_est}
\tag{$E_3$}
\sup_{0 \leq \alpha \leq 1}\bigl\|\mathbb E|\nabla u_m|^{2q}\bigr\|_{L^{\frac{2}{1-\alpha}}\bigl([0, T],L^{\frac{2d}{d-2+2\alpha}}\bigr)} \leq 
C\|\nabla f\|^{2q}_{4q}.
\end{equation}
\end{proposition}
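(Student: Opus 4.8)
\emph{Set-up and the deterministic equation for the second moment.}
The plan is to derive, from the regularized equation, a deterministic (inhomogeneous) parabolic equation satisfied by $\bar Z_m:=\mathbb E|\nabla u_m|^{2q}$, and then to run on it a parabolic energy estimate in which the form-boundedness of $b$ absorbs all the singular terms. First differentiate the It\^o form $du_m+\mu u_m\,dt+b_m\cdot\nabla u_m\,dt+\sigma\nabla u_m\,dB_t-\frac{\sigma^2}{2}\Delta u_m\,dt=0$: since $u_m$ is a smooth strong solution, $w_m:=\nabla u_m$ solves the vector SPDE
\[
dw_m^i+\mu w_m^i\,dt+b_m\cdot\nabla w_m^i\,dt+(\partial_i b_m^j)w_m^j\,dt+\sigma\nabla w_m^i\,dB_t-\tfrac{\sigma^2}{2}\Delta w_m^i\,dt=0,
\]
the only delicate term being the zeroth-order coupling $(\partial_i b_m^j)w_m^j$, which involves $\nabla b_m$ (not controlled by the form bound). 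Applying It\^o's formula to $|w_m|^{2q}$ one finds that the quadratic-variation correction of $\sigma\nabla w_m^i\,dB_t$ cancels exactly the second-order terms produced by $\frac{\sigma^2}{2}\Delta w_m^i$, so $Z_m:=|w_m|^{2q}$ satisfies
\[
dZ_m+2\mu q Z_m\,dt+b_m\cdot\nabla Z_m\,dt+2q|w_m|^{2q-2}(\partial_i b_m^j)w_m^iw_m^j\,dt+\sigma\nabla Z_m\cdot dB_t-\tfrac{\sigma^2}{2}\Delta Z_m\,dt=0 .
\]
Taking expectations (the martingale term vanishes, $b_m$ is deterministic) gives
\[
\partial_t\bar Z_m+2\mu q\bar Z_m+b_m\cdot\nabla\bar Z_m-\tfrac{\sigma^2}{2}\Delta\bar Z_m=G_m,\qquad G_m:=-2q\,\mathbb E\bigl[|w_m|^{2q-2}(\partial_i b_m^j)w_m^iw_m^j\bigr].
\]

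\emph{Energy estimate.}
Next I would test this equation against $\bar Z_m$ and integrate over $J_T\times\mathbb R^d$; to obtain the full range $0\le\alpha\le1$ in $(E_3)$ I would combine the resulting $L^\infty(J_T,L^2)$ bound with the bound on $\int_{J_T}\|\nabla\bar Z_m\|_2^2\,ds$ via the Sobolev embedding $\dot W^{1,2}\hookrightarrow L^{2d/(d-2)}$ and H\"older interpolation in $t$. The diffusion term yields the good term $\frac{\sigma^2}{2}\|\nabla\bar Z_m\|_2^2$ and the $\mu$-term yields $2\mu q\|\bar Z_m\|_2^2$. For the transport term I would \emph{not} integrate by parts (which would bring in the uncontrolled $\mathrm{div}\,b_m$) but simply estimate $|\langle b_m\cdot\nabla\bar Z_m,\bar Z_m\rangle|\le\|b_m\bar Z_m\|_2\|\nabla\bar Z_m\|_2\le(\sqrt\delta\|\nabla\bar Z_m\|_2+\sqrt{c_\delta}\|\bar Z_m\|_2)\|\nabla\bar Z_m\|_2$, using that $\bar Z_m\in W^{1,2}$ (from the smoothness and rapid decay of $u_m$) and that $b_m\in\mathbf F_\delta$ with $c_\delta$ independent of $m$; the $\sqrt\delta\|\nabla\bar Z_m\|_2^2$ part is to be absorbed and the rest handled by $\mu$ large. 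For $\langle G_m,\bar Z_m\rangle$ I would use Fubini to pull $\mathbb E$ outside and integrate by parts in $x_i$ to move $\nabla$ off $b_m$; expanding, $\partial_i$ falls on $\bar Z_m$, on $|w_m|^{2q-2}$, on $w_m^i$, or on $w_m^j$. Using the pointwise identities $(w_m\cdot\nabla)w_m^j=\frac12\partial_j|w_m|^2$ and $|w_m|^{2q-2}\nabla|w_m|^2=q^{-1}\nabla|w_m|^{2q}$, the term from $\partial_i$ on $w_m^j$ reproduces exactly the transport term $\langle b_m\cdot\nabla\bar Z_m,\bar Z_m\rangle$, and the term from $\partial_i$ on $\bar Z_m$ is bounded by $2q\|b_m\bar Z_m\|_2\|\nabla\bar Z_m\|_2$ (since $\mathbb E[|w_m|^{2q-2}w_m^iw_m^j]$ is dominated by $\mathbb E|w_m|^{2q}=\bar Z_m$), both handled as before; the remaining two terms (from $\partial_i$ on $|w_m|^{2q-2}$ or on $w_m^i$, which produce a factor $|D^2u_m|$) are bounded by $\lesssim q(q+d)\,\mathbb E\!\int\bar Z_m|w_m|^{2q-1}|D^2u_m||b_m|\,dx$.

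\emph{The main obstacle.}
This last quantity is the crux: unlike the elliptic case of \cite{KS}, where the analogous $\Delta v$ term is removed by substituting the equation, here $\Delta u_m$ cannot be traded away pointwise because of the stochastic integral, and, as the computation above shows, the STE never produces a $W^{2,2}$-type good term to absorb it. I would treat it by Cauchy--Schwarz in $(\omega,x)$, writing $\bar Z_m|w_m|^{2q-1}|D^2u_m||b_m|=\bigl(\bar Z_m^{1/2}|b_m||w_m|^q\bigr)\bigl(\bar Z_m^{1/2}|w_m|^{q-1}|D^2u_m|\bigr)$: the first factor, squared and integrated, equals $\|b_m\bar Z_m\|_2^2$ (using $\mathbb E|w_m|^{2q}=\bar Z_m$) and is again absorbable via the form bound, while the second factor forces one to control $\int_{J_T}\mathbb E\!\int\bar Z_m|w_m|^{2q-2}|D^2u_m|^2\,dx\,dt$. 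I expect this companion bound to be obtained by the same scheme applied after substituting $\frac{\sigma^2}{2}\Delta u_m\,dt=du_m+\mu u_m\,dt+b_m\cdot\nabla u_m\,dt+\sigma\nabla u_m\,dB_t$: the pieces containing $b_m\cdot\nabla u_m=b_m\cdot w_m$ then close directly by form-boundedness (indeed $\mathbb E\!\int\bar Z_m|w_m|^{2q-2}|b_m\cdot w_m|^2\le\|b_m\bar Z_m\|_2^2$), the $\mu u_m$ and $du_m$ pieces involve $u_m$ itself (controlled via Proposition \ref{apr_prop}), and the It\^o-bracket pieces are again reduced by the identity for $(w_m\cdot\nabla)w_m$ and by $\partial_i\partial_j u_m=\partial_j\partial_i u_m$. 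All numerical constants accumulated along the way --- the $2q$ from differentiating $|w_m|^{2q}$, the $d$ from the $d$ directions in the integrations by parts, and the extra $1$ --- assemble into $\beta_{2q}=1+4qd$, which is precisely why the absorption into $\frac{\sigma^2}{2}\|\nabla\bar Z_m\|_2^2$ needs $\sqrt\delta<\sigma^2/(2\beta_{2q})$; taking $\mu=\mu(\delta,c_\delta,q)$ large then kills all the $c_\delta$-proportional lower-order terms, and Gr\"onwall in $t$ (with $\bar Z_m(0)=|\nabla f|^{2q}$, $\|\bar Z_m(0)\|_2^2=\|\nabla f\|_{4q}^{4q}$) gives $\sup_{J_T}\|\bar Z_m\|_2^2+\int_{J_T}\|\nabla\bar Z_m\|_2^2\,ds\lesssim\|\nabla f\|_{4q}^{4q}$, which, together with the interpolation step above, is $(E_3)$; the constants are $m$-independent since only the uniform form bound $(\delta,c_\delta)$ of $\{b_m\}$ was used.
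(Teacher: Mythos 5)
Your overall architecture (derive a deterministic parabolic equation for a second-moment quantity by Stratonovich/It\^{o} cancellation, run an $L^\infty(L^2)\cap L^2(\dot W^{1,2})$ energy estimate absorbing $b_m$ via form-boundedness, then Sobolev-embed and interpolate) is the same as the paper's, and your computation of the SPDE for $|\nabla u_m|^{2q}$ and the exact cancellation of the It\^{o} correction against $\tfrac{\sigma^2}{2}\Delta$ are correct. The difference is the choice of unknown, and it is exactly this choice that creates the gap you yourself flag as ``the main obstacle.'' By working with the single scalar $\bar Z_m=\mathbb E|\nabla u_m|^{2q}$, your source term is $G_m=-2q\,\mathbb E[|w_m|^{2q-2}(\partial_i b_m^j)w_m^iw_m^j]$, whose moment $\mathbb E[|w_m|^{2q-2}w_m^iw_m^j]$ is \emph{not} a function of $\bar Z_m$; when you integrate by parts to remove $\nabla b_m$ you are forced to differentiate the product $|w_m|^{2q-2}w_m^iw_m^j$ pointwise inside the expectation, and the terms where $\partial_i$ falls on $|w_m|^{2q-2}$ or on $w_m^i$ genuinely contain $|D^2u_m|$. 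Your proposed repair --- Cauchy--Schwarz leading to a companion bound on $\int_{J_T}\mathbb E\int\bar Z_m|w_m|^{2q-2}|D^2u_m|^2$, to be obtained ``by the same scheme after substituting $\tfrac{\sigma^2}{2}\Delta u_m\,dt=du_m+\dots$'' --- is not an argument: after that substitution the $du_m$ and $\sigma\nabla u_m\,dB_t$ pieces sit inside expectations of products with other random factors, so the martingale term does not drop out and nothing closes; moreover, as you note, the STE energy identity never produces a signed $W^{2,2}$-type term capable of absorbing such a quantity (consistent with Remark 3 in the introduction: for $b\in\mathbf F_\delta$ there are no $W^{2,p}$ estimates beyond $p=2$, and none appear here with a good sign). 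As written, the proof does not close.

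The paper circumvents this by enlarging the unknown to the full family $v_I=\mathbb E\bigl[\prod_{r\in I}\partial_{x_r}u_m\bigr]$ over all multi-indices $I$ of length $2q$. This family satisfies a \emph{closed} linear parabolic system: the coupling term is $\sum_{r\in I}\sum_k\partial_{x_r}b_m^k\,v_{I-r+k}$, with $v_{I-r+k}$ again a member of the family, so after integrating by parts the derivative $\partial_{x_r}$ lands on the deterministic functions $v_I$ or $v_{I-r+k}$ and is controlled by the Dirichlet term $\sum_J\int\langle|\nabla v_J|^2\rangle$ --- no second derivative of $u_m$ ever appears inside an expectation. Summing the energy identities over all $|I|=2q$ (each multi-index being counted $4qd$ times) is what produces the constant $\beta_{2q}=1+4qd$ and the condition $\sqrt{\delta}<\sigma^2/(2\beta_{2q})$; in your scheme this constant is asserted post hoc rather than derived. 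If you want to salvage your approach, the natural fix is precisely to keep $\partial_i\mathbb E[|w_m|^{2q-2}w_m^iw_m^j]$ as the gradient of a new moment and close the resulting hierarchy --- which is the paper's multi-index system in disguise. Your Step 3 (Sobolev embedding plus interpolation between $L^\infty(L^2)$ and $L^2(L^{2d/(d-2)})$, with $\|\bar Z_m(0)\|_2^2=\|\nabla f\|_{4q}^{4q}$) matches the paper's endgame and would be fine once the energy estimate is actually established.
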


\begin{proof}[Proof of Proposition \ref{apr_prop}]
For brevity, we write $u$ for $u_m$ in this proof.
The identity \eqref{strat_id} allows us to rewrite \eqref{eq5} as 
\begin{equation}
\label{eq5_}
u(t,\cdot) - f + \mu\int_0^t u ds +\int_0^t b_m \cdot \nabla u ds + \sigma\int_0^t \nabla u dB_s -\frac{\sigma^2}{2}\int_0^t \Delta u ds=0 \quad \text{a.s.,}\quad t \in J_T.
\end{equation}
Below we will be appealing to \eqref{eq5_}.

\medskip
 
We first prove \eqref{e1}.
Applying It\^{o}'s formula to $u^2$, we obtain, in view of \eqref{eq5_}, 
\begin{align*}
u^2(t)-f^2=-2\mu\int_0^t u^2 ds 
-\int_0^t b_m\cdot\nabla u^2 ds 
-\sigma\int_0^t \nabla u^2 dB_s + \frac{\sigma^2}{2}\int_0^t \Delta u^2 ds.
\end{align*}
Since $t \mapsto \int_0^t \nabla u^2  dB_s$ is a martingale, 
$v=\mathbb E u^2$ satisfies
$$
\partial_t v = - 2\mu v 
-b_m\cdot\nabla v + \frac{\sigma^2}{2} \Delta v, \quad v(0)=f^2.
$$
We multiply the last equation by $v|v|^{p-2}$ and integrate by parts 
(recall that $v_p=v|v|^{\frac{p}{2}-1}$),
$$
\frac{1}{p}\partial_t \langle |v_p|^2\rangle + 2\mu \langle |v_p|^2\rangle
+ \frac{4}{pp'}\frac{\sigma^2}{2} \langle |\nabla v_p|^2\rangle - \frac{2}{p}\langle b_m \cdot \nabla v_p,v_p\rangle \leq 0,
$$
so applying the quadratic inequality we have (for $\varepsilon>0$)
$$
\partial_t \langle |v|^p\rangle + 2p \mu \langle |v|^p\rangle
+ \frac{2\sigma^2}{p'}\langle |\nabla v_p|^2\rangle - 2\biggl(\varepsilon \langle |\nabla v_p|^2\rangle + \frac{1}{4\varepsilon} \langle b_m^2 v_p^2 \rangle  \biggr) \leq 0.
$$
Finally, by our assumption on $b_m$,
$$
\partial_t \langle |v|^p\rangle + 2p \mu \langle |v|^p\rangle 
+ \frac{2\sigma^2}{p'}\langle |\nabla v_p|^2\rangle - 2\biggl(\varepsilon \langle |\nabla v_p|^2\rangle + \frac{\delta}{4\varepsilon} \langle |\nabla v_p|^2\rangle + \frac{c_\delta}{4\varepsilon}\langle |v|^p\rangle  \biggr) \leq 0.
$$
Taking $\varepsilon=\frac{\sqrt{\delta}}{2}$ in the last inequality and integrating with respect to $t$, we obtain for $t>0$
$$
\langle |v(t)|^p\rangle + 2\biggl(\frac{\sigma^2}{p'}-\sqrt{\delta} \biggr)\int_0^t \langle |\nabla v_p|^2\rangle ds + 
\biggl[ 2p\mu - \frac{c_\delta}{2\sqrt{\delta}}\biggr]
\int_0^t \langle |v|^p\rangle ds  \leq \|f^2\|^p_{p},
$$
where $\frac{\sigma^2}{p'}-\sqrt{\delta}>0$ since $p>p_c$. 
Taking $\mu \geq \frac{c_\delta}{4\sqrt{\delta}p}$, we arrive at \eqref{e1}.

\medskip

Now we deal with \eqref{e2}. 
Let $\mu \geq \frac{c_\delta}{4\sqrt{\delta}p}$ as above.
By \eqref{e1},  
\begin{equation}
\label{eee}
\sup_{t \in J_T}\bigl\langle \rho \mathbb E u^2(t)\bigr\rangle \leq \|\rho\|_{p'}\sup_{t \in J_T}\|\mathbb Eu^2(t)\|_p \leq  
c_1\|f\|_{2p}^2,
\end{equation}
since $\theta>\frac{d}{2}$  in the definition of $\rho$.

We multiply \eqref{eq5_} by $\rho \int_0^t u ds$, integrate, and take expectation, to get
\begin{align}
\label{chain}
\mathbb E\bigl\langle \rho\int_0^t u ds,u(t)\bigr\rangle & = \mathbb E\bigl\langle \rho\int_0^t u ds,f\bigr\rangle - \mathbb E\bigl\langle \rho\int_0^t u ds,b_m \cdot \nabla  \int_0^t u ds\bigr\rangle  \\
&  - \sigma \mathbb E\bigl\langle \rho\int_0^t u ds,\int_0^t \nabla u dB_s\bigr\rangle + \frac{\sigma^2}{2}\mathbb E
\bigl\langle \rho\int_0^t u ds,\int_0^t \Delta u ds\bigr\rangle + \mu\mathbb E \bigl\langle \rho \int_0^t u ds,\int_0^t u ds\bigr\rangle \notag \\
& =:I_1+I_2+I_3+I_4 + I_5. \notag
\end{align}
Denote the left-hand side  of \eqref{chain} by $I_0$.
Set
$$
U:=\int_0^t u ds.
$$
By H\"{o}lder's inequality and \eqref{eee}, 
\begin{align}
\label{e4}
\mathbb E \bigl\langle \rho U^2\bigr\rangle \leq
t \bigl\langle \rho  \int_0^t \mathbb E u^2 ds \bigr\rangle \leq 
t^2 c_1\|f\|^2_{2p}.
\end{align}

Integrating by parts in $I_4$ and using the quadratic inequality, we have
\begin{align*}
\frac{2}{\sigma^2}I_4 &=-E\bigl\langle \rho |\nabla U|^2\bigr\rangle 
- E\bigl\langle U\nabla \rho,\nabla U \bigr\rangle \\
& \leq 
-E\bigl\langle \rho |\nabla U|^2\bigr\rangle +\alpha E \bigl\langle |\nabla \rho|U^2\bigr\rangle + \frac{1}{4\alpha} E \bigl\langle |\nabla \rho| |\nabla U|^2\bigr\rangle \qquad (\alpha>0)\\
& \text{ (we are applying \eqref{two_est} in the last term, and \eqref{two_est}, \eqref{e4} in the middle term)} \\
& \leq 
-\left(1-\frac{\theta\sqrt{\kappa}}{4\alpha} \right)E\bigl\langle \rho |\nabla U|^2\bigr\rangle +\theta\sqrt{\kappa}\alpha T^2 
c_1\|f\|_{2p}^2.
 \end{align*}
Substituting the last estimate into \eqref{chain}, we obtain
\begin{align}
\label{e5}
\frac{\sigma^2}{2}\left(1-\frac{\theta\sqrt{\kappa}}{4\alpha} \right) \mathbb E\bigl\langle \rho |\nabla U|^2\bigr\rangle \leq \frac{\sigma^2}{2} \theta\sqrt{\kappa}\alpha T^2 
c_1\|f\|_{2p}^2 + |I_0|+|I_1|+|I_2|+|I_3|+|I_5|.
\end{align}

We now estimate $|I_i|$, $i=0,1,2,3,5$. 
By \eqref{eee} and \eqref{e4},
$$
|I_0| \leq \left(\mathbb E\bigl\langle \rho U^2\bigr\rangle \right)^{\frac{1}{2}} 
\left(\mathbb E\bigl\langle \rho u^2(t)\bigr\rangle \right)^{\frac{1}{2}} \leq 
c_2\|f\|_{2p}^2.
$$

Similarly, 
$$
|I_1| \leq c_3\|f\|_{2p}^2, \quad |I_5| \leq \mu c_4\|f\|_{2p}^2.
$$

Next, applying the quadratic inequality, we get
\begin{align*}
|I_2| 
& \leq \nu \mathbb E \bigl\langle \rho b_m^2 U^2 \bigr\rangle + \frac{1}{4\nu} \mathbb E \bigl\langle \rho|\nabla U|^2 \bigr\rangle \qquad (\nu>0)\\
& \text{\big(in the first term, we apply $b_m \in \mathbf{F}_\delta$ with $\varphi:=\sqrt{\rho }U$)} \\
& \leq \nu\bigl(\delta \mathbb E\langle |\nabla (\sqrt{\rho}U)|^2\rangle + c_\delta \mathbb E\langle \rho U^2 \rangle  \bigr) +   \frac{1}{4\nu} \mathbb E 
\bigl\langle \rho|\nabla U|^2 \bigr\rangle \\
& (\text{in the first term, we use $(a+c)^2 \leq (1+\epsilon) a^2+(1+\frac{1}{\epsilon})c^2$, $\epsilon>0$}) \\
& \leq \nu \delta (1+\epsilon)\mathbb E \bigl\langle  \left|\sqrt{\rho}\,\nabla U\right|^2 \bigr\rangle
 + \nu \delta\big(1+\frac{1}{\epsilon}\big) \mathbb E \bigl\langle \left|U\nabla \sqrt{\rho}\right|^2 \bigr\rangle
 + \nu c_\delta \mathbb E\langle \rho U^2 \rangle + \frac{1}{4\nu} \mathbb E \bigl\langle \rho \left|\nabla  U \right|^2 \bigr\rangle \\
& \text{(in the second term, we apply \eqref{two_est} and then use \eqref{e4}; also, we apply \eqref{e4} in the last term)} \\
& \leq \left( \nu \delta(1+\epsilon) + \frac{1}{4\nu}\right) \bigl\langle \rho \left|\nabla U \right|^2 \bigr\rangle +  
T^2 c_5\|f\|_{2p}^2, \quad 
c_5=c_5(\nu,\delta,c_\delta,\theta,\kappa,\epsilon).
\end{align*}
In the current setting, we have $\int_0^t \nabla u dB_s=\nabla \int_0^t u dB_s$
(see, for instance, \cite{HN}). Thus,  integrating by parts,
we obtain 
$$
I_3= \sigma \mathbb E\bigl\langle \rho \nabla U,\int_0^t u dB_s\bigr\rangle +
\sigma \mathbb E\bigl\langle U\nabla \rho,\int_0^t u dB_s\bigr\rangle,
$$ 
so 
\begin{align*}
|I_3| 
& \leq \sigma \bigl(\mathbb E\bigl\langle 
\rho \left|\nabla U \right|^2 \bigr\rangle\bigr)^{\frac{1}{2}} \bigl(\mathbb E\bigl\langle 
\rho \left(\int_0^t u dB_s \right)^2 \bigr\rangle\bigr)^{\frac{1}{2}} \\
& + 
\sigma \left(\mathbb E\bigl\langle |\nabla \rho |\, U^2 \bigr\rangle\right)^{\frac{1}{2}}
\bigl(\mathbb E\bigl\langle |\nabla \rho | \bigl(\int_0^t u dB_s \bigr)^2 \bigr\rangle\bigr)^{\frac{1}{2}}
\\
& \text{ (we use \eqref{two_est} and apply the It\^{o} isometry)} \\
& \leq \sigma 
\left(\mathbb E\bigl\langle \rho \left|\nabla U \right|^2 \bigr\rangle\right)^{\frac{1}{2}}
\bigl(\mathbb E\bigl\langle \rho \int_0^t u^2 ds \bigr\rangle\bigr)^{\frac{1}{2}} 
 \\
&+ 
\theta\sqrt{\kappa}\sigma \left(\mathbb E\bigl\langle \rho U^2 \bigr\rangle\right)^{\frac{1}{2}}
\left(\mathbb E\bigl\langle \rho \int_0^t u^2 ds \bigr\rangle\right)^{\frac{1}{2}}
\\
& \text{(we apply the quadratic inequality in the first term and then use \eqref{e4})} \\
& \leq 
\sigma \gamma \mathbb E\bigl\langle \rho \left|\nabla U \right|^2 \bigr\rangle
+ \frac{\sigma T^2c_1 } {4\gamma}\|f\|_{2p}^2 +
\theta\sqrt{\kappa}\sigma  T^2 c_1\|f\|_{2p}^2 \qquad (\gamma>0).
\end{align*}

Substituting the above estimates on $|I_0|$, $|I_1|$, $|I_2|$, $|I_3|$ and $|I_5|$
in \eqref{e5}, we obtain
$$
\left(\frac{\sigma^2}{2}- \nu \delta (1+\epsilon) - \frac{1}{4\nu} -\sigma\gamma-\frac{\sigma^2}{2}\frac{\theta\sqrt{\kappa}}{4\alpha} \right) \mathbb E\bigl\langle \rho \left|\nabla U \right|^2\bigr\rangle \leq 
c_6\|f\|_{2p}^2
$$
for an appropriate constant 
$c_6=c_6(\alpha,\gamma,\nu,\delta,\theta,\kappa,\epsilon,c_\delta,\mu)<\infty$. 
Take $\nu=(2\sqrt{\delta})^{-1}$. 
Since $\sqrt{\delta}< \frac{\sigma^2}{2}$ by assumption, we can
select $\gamma$, $\epsilon$ sufficiently small and $\alpha$ sufficiently large so that 
$$\frac{\sigma^2}{2}- \big(\nu \delta + \frac{1}{4\nu}\big) - \nu\delta\varepsilon - \sigma\gamma-\frac{\sigma^2}{2}\frac{\theta\sqrt{\kappa}}{4\alpha}>0,$$ 
and thus  \eqref{e2} follows with constant 
$C_2=c_6\big(\frac{\sigma^2}{2}- \nu \delta (1+\epsilon) - \frac{1}{4\nu} -\sigma\gamma-\frac{\sigma^2}{2}\frac{\theta\sqrt{\kappa}}{4\alpha} \big)^{-1}$.
\end{proof}

\begin{remark}
In Proposition \ref{apr_prop}, the interval $(p_c,\infty)$ of admissible values of $p$ decreases to the empty set as $\sqrt{\delta} \uparrow \sigma^2$. 
In fact, one can show that if $b \in \mathbf{F}_\delta$, $\sqrt{\delta}<\sigma^2$ and $b_m \in C_c^\infty$ are as above, then the limit
$$
s\mbox{-}L^p\mbox{-}\lim_m e^{-t\Lambda_m} \quad \text{(loc.\,uniformly in $t \geq 0$)}, \quad p>p_c,
$$ 
where
$
\Lambda_m=-\frac{\sigma^2}{2}\Delta + b_m \cdot \nabla$, $D(\Lambda_m)=W^{2,p}$,
exists and determines a $L^\infty$ contraction, quasi contraction holomorphic semigroup in $L^p$, say, $e^{-t\Lambda}$, 
see \cite[Theorems 4.2, 4.3]{KiS3}. The operator $\Lambda$ is an appropriate operator realization of the formal operator $-\frac{\sigma^2}{2}\Delta + b \cdot \nabla$ in $L^p$. One can compare this result with the example in \cite[Sect.\,7]{BFGM}, where the authors show that the SDE 
$$
X_t=-\int_0^t b(X_s) ds + \sigma B_t, \quad b(x)=\sqrt{\delta}\frac{d-2}{2}|x|^{-2}x \in \mathbf{F}_\delta,
$$
corresponding to operator $-\frac{\sigma^2}{2}\Delta + b \cdot \nabla$,
does not have a weak solution if $\sqrt{\delta}>\sigma^2$.  
\end{remark}

\medskip

\begin{proof}[Proof of Proposition \ref{apr_prop-2}]
For any multiindex $I$ with entries in $\{1, \dots, d\}$, i.e., an element of $\{1,\dots,d\} \times \dots \times \{1,\dots,d\}$, say, $p$ times, we write $|I|=p$. 
For any such multiindex $I$ and $l\in \{1,\dots,d\}$, we denote by 
$I-l$  the multiindex obtained from $I$ by dropping an index of value $l$. Let $I-l+k$ be the multiindex $I$ with an index of value $l$ dropped and replaced with an index of value $k$. 
It does not matter from which component the value $l$ is dropped.

For brevity, we write $u$ for $u_m$ in this proof.
Set $$w_r:=\partial_{x_r} u, \quad 1 \leq r \leq d,
$$
where $u$ is the strong solution of \eqref{eq5}, and 
$$
w_I:=\prod_{r \in I} \partial_{x_r} u.
$$

\smallskip

\textit{Step 1.}~We apply It\^{o}'s formula in Stratonovich form to $w_I$, obtaining
$$
w_I(t)-\prod_{r \in I}\partial_{x_r} f=\sum_{r \in I} \int_0^t w_{I-r}(s) \circ dw_r(s).
$$
Next, differentiating \eqref{eq5_} in $x_r$ and then substituting the resulting expression for $dw_r$ into the previous formula, we obtain
$$
w_I(t)-\prod_{r \in I}\partial_{x_r} f = -\mu\int_0^t w_I ds - \sum_{r \in I} \int_0^t w_{I-r}\bigl(b_m\cdot\nabla w_r + \partial_{x_r}b \cdot\nabla u\bigr)ds - \sigma\sum_{r \in I}\int_0^t w_{I - r} \nabla w_r \circ dB_s.
$$
Let $b^k_m$, $k=1,\dots,d$, be the components of the vector field $b_m$. We have
\begin{align*}
w_I(t)& -\prod_{r \in I}\partial_{x_r} f=-\mu\int_0^t w_I ds -\sum_{r \in I} \int_0^t w_{I-r}\bigl(b_m\cdot\nabla w_r + \partial_{x_r}b_m \cdot\nabla u\bigr)ds - \sigma\int_0^t \nabla w_I \circ dB_s \\
&\text{(we use $\int_0^t \nabla w_I \circ dB_s=\int_0^t \nabla w_I dB_s-\frac{1}{2}\sum_{k=1}^d[\partial_{x_k}w_I,B^k]_t$)} \\
&=-\mu\int_0^t w_I ds -\sum_{r \in I} \int_0^t w_{I-r}\bigl(b_m\cdot\nabla w_r + \partial_{x_r}b_m  \cdot\nabla u\bigr)ds - \sigma\int_0^t \nabla w_I  dB_s + \frac{\sigma^2}{2} \int_0^t \Delta w_I ds \\
&=-\mu\int_0^t w_I ds-\int_0^t b_m\cdot\nabla w_I ds -\sum_{r \in I} \sum_{k=1}^d \int_0^t  \partial_{x_r}b^k_m w_{I-r+k} ds -  \sigma\int_0^t \nabla w_I  dB_s + \frac{\sigma^2}{2} \int_0^t \Delta w_I ds.
\end{align*}
Put
$$
v_I:=\mathbb E[w_I].
$$
Since $t \mapsto \int_0^t \nabla w_I  dB_s$ is a martingale, 
$v_I$ satisfies
$$
v_I(t) - \prod_{r \in I}\partial_{x_r} f=-\mu\int_0^t v_I ds -\int_0^t b_m \cdot \nabla v_I ds - \sum_{r \in I} \sum_{k=1}^d \int_0^t \partial_{x_r}b^k_m v_{I-r+k}ds + \frac{\sigma^2}{2} \int_0^t \Delta v_I ds,
$$
i.e.,
\begin{equation}
\label{cp}
\partial_t v_I=- \mu v_I + \frac{\sigma^2}{2}\Delta v_I - b_m\cdot \nabla v_I - \sum_{r \in I} \sum_{k=1}^d \partial_{x_r}b^k_m v_{I-r+k}, \quad v_I(0)=\prod_{r \in I}\partial_{x_r} f.
\end{equation}

\textit{Step 2.}~We multiply the equation in \eqref{cp} by  $v_I$, and integrate:
\begin{align*}
\frac{1}{2}\partial_t \bigl\langle v_I^2 \bigr\rangle + \mu \langle v_I^2 \rangle + \frac{\sigma^2}{2} \bigl\langle (\nabla v_I)^2 \bigr\rangle  = - \bigl\langle v_I,b_m\cdot\nabla v_I\bigr\rangle - \bigl\langle v_I, \sum_{r \in I} \sum_{k=1}^d \partial_{x_r}b^k_m  v_{I-r+k}\bigr\rangle.
\end{align*}
Then, for every $t \in J_T$,
\begin{align}
\label{wI_id}
\frac{1}{2}\bigl\langle v_I^2(t) \bigr\rangle & - \frac{1}{2} \bigl\langle v_I^2(0) \bigr\rangle +\mu\int_0^t v_I^2 ds  + \frac{\sigma^2}{2}  \int_0^ t \bigl\langle (\nabla v_I)^2 \bigr\rangle ds \\ 
\notag
& = - \int_0^t\bigl\langle v_I,b_m\cdot\nabla v_I\bigr\rangle ds - \int_0^t \bigl\langle v_I, \sum_{r \in I} \sum_{k=1}^d \partial_{x_r}b^k_m v_{I-r+k}\bigr\rangle ds =:-S^1_I-S^2_I. \notag
\end{align}

We estimate $|S^1_I|$ and $|S^2_I|$ as follows:
\begin{align}
|S^1_I|  & \leq \biggl|\int_0^t \bigl\langle v_I,b_m\cdot\nabla v_I\bigr\rangle ds\biggr| \leq \gamma \int_0^t \bigl\langle (\nabla v_I)^2\bigr\rangle ds + \frac{1}{4\gamma} \int_0^t\bigl\langle v_I^2 b^2_m\bigr\rangle ds 
\nonumber\\
& \text{(we use $b_m \in \mathbf{F}_\delta$))}
\nonumber\\
& \leq 
\biggl(\gamma + \frac{\delta}{4\gamma}\biggr) \int_0^t \bigl\langle (\nabla v_I)^2\bigr\rangle ds  + \frac{c_\delta}{4\gamma}\int_0^t\langle v_I^2\rangle.
\label{e:si1}
\end{align}
Next, integrating by parts, and applying the quadratic inequality, we have
\begin{align*}
|S^2_I|&=\biggl|-\int_0^t\sum_{r \in I} \sum_{k=1}^d \langle (v_{I-r+k}\partial_{x_r}v_I + v_I\partial_{x_r}v_{I-r+k})b^k_m \rangle\biggr|ds \\
& \leq \alpha \int_0^t\sum_{r \in I} \sum_{k=1}^d \bigl\langle (\partial_{x_r}v_I)^2 + (\partial_{x_r}v_{I-r+k})^2 \bigr\rangle ds + \frac{1}{4\alpha} \int_0^t \sum_{r \in I} \sum_{k=1}^d \bigl\langle v_{I-r+k}^2 (b^k_m)^2 + v_I^2 (b^k_m)^2 \bigr\rangle ds.
\end{align*}

Let $q=1,2,\dots$. 
Summing over all $I$ with $|I|=2q$ and noticing that every multiindex of length $2q$
is counted $4qd$ times, we obtain
\begin{align*}
\sum_I |S^2_I| & \leq 
4\alpha qd
\sum_I \int_0^t \bigl\langle |\nabla v_I|^2\bigr\rangle ds + 
\frac{qd}{\alpha} 
\sum_I \int_0^ t\bigl\langle v_I^2 b^2_m\bigr\rangle ds 
\\
& \text{(use $b_m \in \mathbf{F}_\delta$ in the second term)}
\\ 
& \leq 
4\alpha qd
\sum_I \int_0^t \bigl\langle |\nabla v_I|^2\bigr\rangle ds + 
\frac{qd\delta}{\alpha} 
\sum_I \int_0^ t\bigl\langle |\nabla v_I|^2 \bigr\rangle ds + 
%\frac{qdc_\delta}{4\alpha} 
\frac{qdc_\delta}{\alpha} 
\sum_I \int_0^ t\bigl\langle v_I^2 \bigr\rangle ds.
\end{align*}
Also, by \eqref{e:si1}, we have
$$
\sum_I |S^1_I| \leq 
\left(\gamma+\frac{\delta}{4\gamma}\right)\sum_I \int_0^ t\bigl\langle |\nabla v_I|^2 \bigr\rangle ds
+ \frac{c_\delta}{4\gamma}\sum_I\int_0^t\langle v_I^2\rangle.
$$

Now, armed with the last two estimates, we sum both sides of \eqref{wI_id} over all $I$ with $|I|=2q$ to obtain
\begin{align*}
\frac{1}{2}\sum_I \bigl\langle v_I^2(t) \bigr\rangle & + \mu\int_0^t v_I^2 ds + \varkappa \int_0^ t  \sum_I \bigl\langle |\nabla v_I|^2 \bigr\rangle ds  \\
& \leq  \frac{1}{2} \sum_I \bigl\langle v_I^2(0)\bigr\rangle  + \biggl[
\frac{qdc_\delta}{\alpha} + 
\frac{c_\delta}{4\gamma}\biggr] \sum_I\int_0^t\langle v_I^2\rangle,
\end{align*}
where
$$
\varkappa:=\frac{\sigma^2}{2} - \gamma - \frac{\delta}{4\gamma} - 
4\alpha qd-\frac{qd\delta}{\alpha}.
$$
The maximum
$
\varkappa_*:=\max_{\alpha,\gamma>0}\varkappa= \frac{\sigma^2}{2}  - \sqrt{\delta} - 
4qd\sqrt{\delta}
$
is attained at 
$$\alpha=
\frac{\sqrt{\delta}}{2}, \quad \gamma=\frac{\sqrt{\delta}}{2}.
$$
For this choice of $\alpha$ and $\gamma$, we have 
$\varkappa_*=\frac{\sigma^2}{2}-\beta_{2q}\sqrt{\delta}$.
Since $\beta_{2q}\sqrt{\delta}<\frac{\sigma^2}{2}$ by assumption, we have $\varkappa_*>0$ and
\begin{equation*}
\frac{1}{2}\sum_I \bigl\langle v_I^2(t) \bigr\rangle  + \bigl(\mu-\hat{c}\,\bigr)\int_0^t v_I^2 ds  + \varkappa_* \int_0^ t  \sum_I \bigl\langle |\nabla v_I|^2 \bigr\rangle ds \leq  \frac{1}{2} \sum_I \bigl\langle v_I^2(0) \bigr\rangle,
\end{equation*}
where 
$\hat{c}:=\frac{2qdc_\delta}{\sqrt{\delta}} + \frac{c_\delta}{2\sqrt{\delta}}$. 
Thus, choosing  $\mu \geq \hat{c}$, 
we obtain
$$
\frac{1}{2}\sup_{\tau \in [0,t]}\sum_I \bigl\langle v_I^2(\tau) \bigr\rangle  + \varkappa_* \int_0^ t  \sum_I \bigl\langle |\nabla v_I|^2 \bigr\rangle ds \leq  \frac{1}{2} \sum_I \bigl\langle v_I^2(0) \bigr\rangle.
$$

\smallskip

\textit{Step 3}.~Recalling that $v_I=\mathbb E\bigl[\prod_{r \in I} \partial_{x_r}u \bigr]$, $v_I(0)=\prod_{r \in I}\partial_{x_r} f$, we obtain from the previous estimate:
\begin{align}
&\sup_{t \in J_T} \sum_{1 \leq k \leq d}\bigl\langle (\mathbb E(\partial_{x_k}u)^{2q})^2\bigr\rangle 
\leq c_1 \bigl\langle |\nabla f|^{2q} \bigr\rangle, 
 \label{ww1}\\
&\sum_{1 \leq k \leq d}\int_0^t \bigl\langle |\nabla \mathbb E(\partial_{x_k}u)^{2q}|^2\bigr\rangle ds 
\leq c_2 \bigl\langle |\nabla f|^{2q} \bigr\rangle,
\label{ww2}
\end{align}
for appropriate positive constants $c_1$, $c_2$. 
By the Sobolev embedding theorem,
$$
\int_0^t \bigl\langle (\nabla \mathbb E|\nabla u|^{2q})^2\bigr\rangle ds \geq 
c_3\int_0^t 
\bigl\langle (\mathbb E|\nabla u|^{2q})^{\frac{2d}{d-2}} \bigr\rangle^{\frac{d-2}{d}} ds,
$$
so \eqref{ww2}  yields
$$\|\mathbb E|\nabla u|^{2q}\|^2_{L^2(J_T,L^\frac{2d}{d-2})} \leq 
c_4\|\nabla f\|_{4q}^{4q},
$$
for appropriate constant $c_4>0$.

Interpolating between the last estimate, 
and \eqref{ww1}, that is, 
$\|E|\nabla u|^{2q}\|^2_{L^\infty(J_T,L^2)} \leq c_1 \|\nabla f\|_{4q}^{4q},$ 
we obtain \eqref{apr_est}.
\end{proof}

\section{Proof of Theorem \ref{thm1}}
 Recall that $\|\cdot\|_{p,\rho}$ denotes the norm in $L^p(\mathbb R^d,\rho dx)$, and $\langle \cdot ,\cdot \rangle_\rho$ the inner product in $L^2(\mathbb R^d,\rho dx)$.
We assume throughout this section that $b\in \mathbf{F}_\delta$ and $b_m$,
$m=1, 2, \dots$ are as in the beginning of the previous section.

\begin{lemma} \label{lem1} Let $b \in \mathbf{F}_\delta$, and let $b_m$ be as 
above. 
Then the following are true:

{\rm (\textit{i})} $\|b\sqrt{\rho}\|_2<\infty$.

{\rm (\textit{ii})} $\|b\sqrt{\rho}\mathbf{1}_{B^c(0,R+1)}\|_2 \downarrow 0$ as $R \rightarrow \infty$.

{\rm (\textit{iii})} $\langle \rho |b-b_m|^2\rangle \rightarrow 0$ as $m \rightarrow \infty$.
\end{lemma}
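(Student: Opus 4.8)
The plan is to prove the three assertions of Lemma \ref{lem1} in order, using only the form-boundedness of $b$ and the elementary decay estimate \eqref{two_est} for $\rho=\rho_{\kappa,\theta}$.

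For (\textit{i}), I would first observe that $\sqrt{\rho} \in W^{1,2}$: indeed $\rho = (1+\kappa|x|^2)^{-\theta}$ with $\theta > d/2$ gives $\rho \in L^1$, and \eqref{two_est} yields $|\nabla\sqrt{\rho}| = \tfrac12 |\nabla\rho|/\sqrt{\rho} \leq \tfrac{\theta\sqrt{\kappa}}{2}\sqrt{\rho}$, so $\nabla\sqrt{\rho} \in L^2$. Then, applying the quadratic-form definition of $b \in \mathbf{F}_\delta$ with the test function $\varphi = \sqrt{\rho}$ (after a routine density/truncation argument, since $b \in L^2_{\loc}$ makes $\|b\sqrt\rho\|_2$ a priori possibly infinite but the inequality passes to the limit),
$$
\|b\sqrt{\rho}\|_2^2 \leq \delta\|\nabla\sqrt{\rho}\|_2^2 + c_\delta\|\sqrt{\rho}\|_2^2 \leq \Big(\tfrac{\delta\theta^2\kappa}{4} + c_\delta\Big)\|\sqrt{\rho}\|_2^2 < \infty.
$$

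For (\textit{ii}), I would run the same computation but with test function $\varphi = \sqrt{\rho}\,\chi_R$ where $\chi_R$ is a smooth cutoff vanishing on $B(0,R+1/2)$ and equal to $1$ off $B(0,R+1)$, with $|\nabla\chi_R|$ bounded uniformly in $R$. Form-boundedness gives
$$
\|b\sqrt{\rho}\,\chi_R\|_2^2 \leq \delta\|\nabla(\sqrt{\rho}\,\chi_R)\|_2^2 + c_\delta\|\sqrt{\rho}\,\chi_R\|_2^2 \leq C\,\|\sqrt{\rho}\,\mathbf{1}_{B^c(0,R+1/2)}\|_2^2,
$$
using \eqref{two_est} and the product rule to absorb the $\nabla\chi_R$ term. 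Since $\rho \in L^1$, the right side tends to $0$ as $R\to\infty$, and $\|b\sqrt{\rho}\,\mathbf{1}_{B^c(0,R+1)}\|_2 \leq \|b\sqrt\rho\,\chi_R\|_2$, giving monotone convergence to $0$.

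For (\textit{iii}), the key point is that $b - b_m$ is also uniformly form-bounded: since $b \in \mathbf{F}_\delta$ and $b_m \in \mathbf{F}_\delta$ with $c_\delta$ independent of $m$, the triangle-inequality-type property for $\mathbf{F}$-classes quoted in the introduction gives $b - b_m \in \mathbf{F}_{4\delta}$ with the constant $4c_\delta$, uniformly in $m$. I would split: fix $\varepsilon>0$, choose $R$ by part (\textit{ii}) (applied to both $b$ and each $b_m$, with uniform constant) so that $\langle \rho|b-b_m|^2 \mathbf{1}_{B^c(0,R+1)}\rangle < \varepsilon$ for all $m$; then on the fixed ball $B(0,R+1)$ use that $\rho$ is bounded there and $b_m \to b$ in $L^2_{\loc}$ to conclude $\langle \rho|b-b_m|^2\mathbf{1}_{B(0,R+1)}\rangle \to 0$. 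Combining the two regimes and letting $\varepsilon \downarrow 0$ finishes the proof. The main obstacle is the justification that the form-bound inequality can be applied to the non-smooth $\varphi = \sqrt{\rho}\,\chi_R$ (and that $\|b\sqrt\rho\|_2$ is genuinely finite rather than the inequality being vacuous): this requires approximating $\varphi$ by $C_c^\infty$ functions, using $|b|\in L^2_{\loc}$ together with Fatou's lemma to pass to the limit on the left side, which is routine but is the one place care is needed.
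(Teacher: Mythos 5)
Your proof is correct and follows essentially the same route as the paper: part (\textit{i}) by testing the form-bound against $\varphi=\sqrt{\rho}\in W^{1,2}$, part (\textit{ii}) by testing against $\sqrt{\rho}$ times a cutoff supported off a large ball and absorbing the gradient terms via \eqref{two_est}, and part (\textit{iii}) by combining the uniform tail estimate with the $L^2_{\loc}$ convergence $b_m\to b$ on a fixed ball. The only differences are cosmetic: you bound the annulus contribution by the tail of $\|\rho\|_{1}$ rather than the explicit decay $(1+\kappa R^2)^{-\theta}R^d$, and you spell out the $\varepsilon$-splitting in (\textit{iii}) that the paper leaves implicit.
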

\begin{proof}
(\textit{i}) Using $b \in \mathbf{F}_\delta$, and applying \eqref{two_est} and 
$\langle \rho \rangle<\infty$, we have
$$
\|b\sqrt{\rho}\|_2^2 \leq \delta \|\nabla \sqrt{\rho}\|_2^2 +c_\delta\langle \rho \rangle <\infty.
$$

(\textit{ii}) 
For any $R\ge 1$, let $\eta_R$ be a $[0, 1]$-valued smooth function such that $\eta_R(x)=1$ if $|x|>R+1$; $\eta_R(x)=0$ if $|x| \leq R$; and $\sup_{R\ge 1}\|\nabla \eta_R\|_\infty\le C$.
Then
\begin{align*}
\|b\sqrt{\rho}\eta_R\|_2^2 \leq \delta \|\nabla [\sqrt{\rho}\eta_R]\|_2^2 + c_\delta\langle \rho\eta_R^2 \rangle.
\end{align*}
We have $\nabla [\sqrt{\rho} \eta_R]=\frac{1}{2\sqrt{\rho}}(\nabla \rho)\eta_R + \sqrt{\rho} \nabla \eta_R=:S_1+S_2$.
Using \eqref{two_est}, we have
$$
\|S_1\|_2^2 \leq C\langle \rho \eta^2_R \rangle \rightarrow 0 \quad \text{ as } R \rightarrow \infty.
$$
Next, we use $\sup_{R\ge 1}\|\nabla \eta_R\|_\infty\le C$ to get
\begin{align*} 
\|S_2\|_2^2 \leq  C (1+\kappa R^2)^{-\theta}  \langle \mathbf{1}_{B(0,R+1)-B(0,R)}\rangle 
= c_d C(1+\kappa R^2)^{-\theta}  R^d \rightarrow 0 \text{ as } R \rightarrow \infty 
\end{align*}
since $\theta>\frac{d}{2}$. This completes the proof of (\textit{ii}).

\smallskip

(\textit{iii}) This is a consequence of (\textit{ii}) and $b_m \rightarrow b$ in $L^2_{\loc}(\mathbb R^d)$.

The proof of Lemma \ref{lem1} is complete.
\end{proof}

\begin{lemma}
\label{cl1}
Let $\beta_2\sqrt{\delta}<\frac{\sigma^2}{2}$, 
$f \in C_c^\infty$ 
and $u_m$ be the strong solution to \eqref{eq5}.
Provided that $\kappa>0$ in the definition of  $\rho$ 
is chosen sufficiently small, 
there exists $\mu\big(\delta, c_\delta\big) \geq 0$ 
such that for any $\mu\ge \mu\big(\delta, c_\delta\big)$, 
$$\lim_{n, m\to\infty}\sup_{t \in J_T}\|\mathbb E|u_n(t)-u_m(t)|^2\|_{2,\rho}=0.$$
\end{lemma}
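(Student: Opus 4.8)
The plan is to show that the sequence $\{u_m\}$ is Cauchy in $L^\infty(J_T,L^2(\Omega,L^2_\rho))$ by deriving an energy inequality for the difference $w_{n,m}:=u_n-u_m$, weighted by $\rho$. First I would subtract the two copies of \eqref{eq5_} (the It\^o form of the equation, with $b=b_n$ and $b=b_m$ respectively), obtaining
\begin{equation*}
w_{n,m}(t) + \mu\int_0^t w_{n,m}\,ds + \int_0^t\bigl(b_n\cdot\nabla w_{n,m} + (b_n-b_m)\cdot\nabla u_m\bigr)ds + \sigma\int_0^t \nabla w_{n,m}\,dB_s - \frac{\sigma^2}{2}\int_0^t \Delta w_{n,m}\,ds = 0.
\end{equation*}
Then I would apply It\^o's formula to $\rho\,w_{n,m}^2$ (or first to $w_{n,m}^2$ and then pair with $\rho$), take expectations to kill the martingale term, and integrate by parts. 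Writing $V:=\mathbb E[w_{n,m}^2]$ and handling the $\rho$-weight via \eqref{two_est}, the main drift/noise contributions produce a term $\frac{\sigma^2}{2}\langle\rho|\nabla V^{1/2}|^2\rangle$-type dissipation (after an It\^o–Stratonovich bookkeeping), the form-bounded term $\langle b_n^2 \rho w_{n,m}^2\rangle$ absorbed into dissipation plus $c_\delta\langle\rho w_{n,m}^2\rangle$ via $b_n\in\mathbf F_\delta$ applied to $\varphi=\sqrt\rho\,w_{n,m}$, and a \emph{source} term
\[
\Bigl|\mathbb E\bigl\langle \rho\, w_{n,m},\,(b_n-b_m)\cdot\nabla u_m\bigr\rangle\Bigr| \le \varepsilon\,\mathbb E\langle\rho|\nabla w_{n,m}|^2\rangle + C_\varepsilon\,\mathbb E\langle\rho |b_n-b_m|^2 |\nabla u_m|^2\rangle + (\text{lower order}),
\]
which is the only term not controlled purely by Gronwall.

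The key structural point is the balance of constants: the coefficient of the dissipation $\mathbb E\langle\rho|\nabla w_{n,m}|^2\rangle$ after collecting all contributions is (up to the $\kappa$-small $\rho$-correction) $\frac{\sigma^2}{2} - \beta_2\sqrt\delta$, which is strictly positive precisely by the hypothesis $\beta_2\sqrt\delta<\frac{\sigma^2}{2}$; this is exactly the mechanism already seen in the proof of Proposition \ref{apr_prop-2} with $q=1$, so I would quote that computation. Choosing $\kappa$ small enough keeps this coefficient positive, and choosing $\mu\ge\mu(\delta,c_\delta)$ large absorbs the $c_\delta\langle\rho w_{n,m}^2\rangle$ terms. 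After these choices one arrives at
\[
\sup_{t\in J_T}\mathbb E\langle\rho\,w_{n,m}^2(t)\rangle + \int_{J_T}\mathbb E\langle\rho|\nabla w_{n,m}|^2\rangle\,ds \;\le\; C(T)\int_{J_T}\mathbb E\langle\rho\,|b_n-b_m|^2|\nabla u_m|^2\rangle\,ds,
\]
with the right side the quantity that must be shown to vanish.

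To control the right-hand side I would split: on the region $\{|x|\le R\}$ use that $b_n\to b$, $b_m\to b$ in $L^2_{\loc}$ together with the uniform (in $m$) bound $\mathbb E|\nabla u_m|^{2q}\in L^\infty(J_T\times\mathbb R^d)$ from Proposition \ref{apr_prop-2} (valid since $\beta_2\sqrt\delta<\frac{\sigma^2}{2}$; take $q=1$ and a H\"older exponent to trade $L^\infty_{\loc}$ control of $\mathbb E|\nabla u_m|^2$ against $L^1_{\loc}$ convergence of $|b_n-b_m|^2$), and on the tail $\{|x|>R\}$ use Lemma \ref{lem1}(ii): $\|b\sqrt\rho\,\mathbf 1_{B^c(0,R+1)}\|_2\downarrow0$, again combined with the uniform gradient bound, to make the tail contribution small uniformly in $n,m$ for large $R$. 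Letting $n,m\to\infty$ first (for fixed $R$) and then $R\to\infty$ gives $\limsup_{n,m}\sup_{t\in J_T}\mathbb E\langle\rho\,w_{n,m}^2(t)\rangle=0$, which is the claim. The main obstacle I anticipate is the source term: one needs the a priori \emph{gradient} regularity of $u_m$ (Proposition \ref{apr_prop-2}) to be uniform in $m$ in order to close the estimate, and one must be careful that the $L^2_{\loc}\to L^1_{\loc}$ convergence of $|b_n-b_m|^2$ pairs correctly against an $L^\infty_{\loc}$ (not merely $L^1_{\loc}$) bound on $\mathbb E|\nabla u_m|^2$, together with the $\rho$-tail estimate of Lemma \ref{lem1}(ii) handling the non-compactness of $\mathbb R^d$.
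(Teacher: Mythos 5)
Your overall strategy -- an energy estimate for the difference $w_{n,m}=u_n-u_m$ weighted by $\rho$, absorption of the form-bounded drift into the dissipation, identification of $(b_n-b_m)\cdot\nabla u_m$ as the critical source term, and control of that source via the a priori gradient bound of Proposition \ref{apr_prop-2} together with the convergence of $b_m$ -- is the same as the paper's. However, there are two genuine gaps in the execution. First, the quantity you estimate is not the one in the statement: you bound $\mathbb E\langle\rho\,w_{n,m}^2\rangle=\langle\rho\,h\rangle$ where $h:=\mathbb E[w_{n,m}^2]$, i.e.\ the $L^1_\rho$-norm of $h$, whereas the lemma asserts $\|h\|_{2,\rho}\to 0$. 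The paper instead derives the (deterministic) transport--diffusion equation satisfied by $h$ and tests it against $\rho h$, which produces $\|h(t)\|_{2,\rho}^2$ and the dissipation $\int\|\nabla h\|_{2,\rho}^2$. Relatedly, the dissipation term $\int\mathbb E\langle\rho|\nabla w_{n,m}|^2\rangle$ in your final inequality does not actually arise: after the Stratonovich-to-It\^{o} conversion and taking expectations, $h$ satisfies $\partial_t h+2\mu h+b_m\cdot\nabla h+2(b_n-b_m)\cdot\mathbb E[w_{n,m}\nabla u_m]=\frac{\sigma^2}{2}\Delta h$ with no $\mathbb E|\nabla w_{n,m}|^2$ term; testing against $\rho$ alone yields no coercive term at all, and only testing against $\rho h$ yields $|\nabla h|^2$. (Once the estimate is run for $\|h\|_{2,\rho}$, only $\frac{\sigma^2}{2}-\sqrt{\delta}>0$ is needed for the energy balance; the full hypothesis $\beta_2\sqrt{\delta}<\frac{\sigma^2}{2}$ enters only through the invocation of \eqref{ww1} with $q=1$, not through the dissipation coefficient as you suggest.)

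Second, and more seriously, your treatment of the source term fails. Applying Young's inequality produces $\int\mathbb E\langle\rho\,|b_n-b_m|^2|\nabla u_m|^2\rangle$, i.e.\ \emph{two} powers of $|b_n-b_m|$, and you propose to pair $|b_n-b_m|^2\to 0$ in $L^1_{\loc}$ against a uniform-in-$m$ bound on $\mathbb E|\nabla u_m|^2$ in $L^\infty(J_T\times\mathbb R^d)$. No such uniform bound is available: the $L^\infty$ bounds quoted from Kunita's theory at the start of Section \ref{apr_sect} depend on the smooth approximation $b_m$, and Proposition \ref{apr_prop-2} (applicable here only with $q=1$) gives uniformity in $m$ only in $L^\infty(J_T,L^2)\cap L^2(J_T,L^{2d/(d-2)})$. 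With $\mathbb E|\nabla u_m|^2$ controlled only in $L^2_x$, one would need $b_n-b_m\to 0$ in $L^4_{\loc}$, which is neither assumed nor true for general $b\in\mathbf{F}_\delta$ (cf.\ Example 5, where $|b|\notin L^{2+\varepsilon}_{\loc}$). The paper circumvents this by keeping only \emph{one} power of $|b_n-b_m|$: Cauchy--Schwarz in $\omega$ gives $\mathbb E[|w_{n,m}\nabla u_m|]\leq h^{1/2}(\mathbb E|\nabla u_m|^2)^{1/2}$, and then Cauchy--Schwarz in $x$ separates $\langle\rho|b_n-b_m|^2\rangle^{1/2}$ (which tends to $0$ by Lemma \ref{lem1}(\textit{iii})) from $\langle h^6\rangle^{1/4}\langle(\mathbb E|\nabla u_m|^2)^2\rangle^{1/4}$, which is bounded uniformly in $n,m$ by \eqref{e1} and \eqref{ww1}. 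Your argument could be repaired along these lines, but as written the key step does not close.
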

\begin{proof}
Set
$$
g \equiv g_{n,m}:=u_n-u_m, \quad n,m=1,2,\dots,
$$
then
$$
g(t)  + \mu\int_0^t g ds + \int_0^t b_m \cdot \nabla g ds + \int_0^t (b_n-b_m) \cdot \nabla u_m ds + \sigma \int_0^t \nabla g dB_s - \frac{\sigma^2}{2}\int_0^t \Delta g ds=0.
$$
Applying It\^{o}'s formula, we obtain
$$
g^2(t) = 
-2\mu\int_0^t g^2 ds - \int_0^t b_m \cdot \nabla g^2 ds - 2\int_0^t g (b_n-b_m) 
\cdot \nabla u_m ds -\sigma\int_0^t \nabla g^2 dB_s + \frac{\sigma^2}{2}\int_0^t \Delta g^2 ds,
$$
so denoting $h:=\mathbb E[g^2]$ we arrive at
$$
\partial_t h + 
2\mu h - \frac{\sigma^2}{2} \Delta h + b_m \cdot \nabla h + 2(b_n-b_m) 
\cdot \mathbb E[g\nabla u_m]=0, \quad h(0)=0.
$$
Multiplying this equation by $\rho h$ and integrating by parts, we obtain
\begin{align}
\label{id9}
\frac{1}{2} \|h(t)\|_{2,\rho}^2 & + 
2\mu\int_0^t \|h\|^2_{2,\rho}ds 
+  \frac{\sigma^2}{2} \int_0^t \|\nabla h\|_{2,\rho}^2 ds +\frac{\sigma^2}{2}\int_0^t \langle (\nabla \rho)h, \nabla h \rangle \\
& + \int_0^t \langle b_m \cdot \nabla h,h\rangle_\rho ds  + 
2\int_0^t \langle h (b_n-b_m) \cdot \mathbb E[g\nabla u_m] \rangle_\rho ds
=0. \notag
\end{align}

Since our assumption on $\delta$ is a strict inequality, using \eqref{two_est} and selecting $\kappa$ sufficiently small, we 
can and will ignore in what follows the terms containing $\nabla \rho$.

Applying the quadratic inequality and using $b_m \in \mathbf{F}_\delta$, we obtain (cf.\,the proof of \eqref{e1})
$$
\frac{\sigma^2}{2} \int_0^t \|\nabla h\|_{2,\rho}^2 ds   + \int_0^t \langle b_m \cdot \nabla h,h\rangle_\rho ds \geq \bigg(\frac{\sigma^2}{2}-\sqrt{\delta}\bigg)  \int_0^t \|\nabla h\|_{2,\rho}^2 ds - \frac{c_\delta}{4\sqrt{\delta}}\int_0^t \|h\|_{2,\rho}^2ds,
$$ 
where $\frac{\sigma^2}{2}-\sqrt{\delta}>0$ by the assumption on $\delta$.

We obtain from \eqref{id9}:
\begin{align*}
\frac12\sup_{\tau \in [0,t]} \|h(\tau)\|_{2,\rho}^2 
& + \bigg(\frac{\sigma^2}{2}-\sqrt{\delta}\bigg)\int_0^t \|\nabla h(s)\|_{2,\rho}^2 ds + 
\biggl[2\mu- \frac{c_\delta}{4\sqrt{\delta}}\biggr] 
\int_0^t \|h\|_{2,\rho}^2ds\\
& \leq 
2\int_0^t \langle h |b_n-b_m| \cdot \mathbb E[|g\nabla u_m|]\rangle_\rho  ds.
\end{align*}
Select $\mu \geq \frac{c_\delta}{4\sqrt{\delta}}$. 
Then the previous estimate yields
$$
\frac{1}{2}\sup_{\tau \in [0,t]} \|h(\tau)\|_{2,\rho}^2  \leq  
2\int_0^t \langle h |b_n-b_m| \cdot \mathbb E[|g\nabla u_m|]\rangle_\rho  ds,
$$
so it remains to show that 
$$
\int_0^t \langle h |b_n-b_m| \cdot \mathbb E[|g\nabla u_m|]\rangle_\rho  ds \rightarrow 0 \quad \text{ as } n,m \rightarrow \infty.
$$
We estimate 
\begin{align*}
\langle h|b_n-b_m| \cdot \mathbb E[|g\nabla u_m|]\rangle_\rho & \leq \langle |b_n-b_m|h(\mathbb E[g^2])^{\frac{1}{2}}(\mathbb E[|\nabla u_m|^2])^{\frac{1}{2}}\rangle_\rho \equiv \langle |b_n-b_m| h^{\frac{3}{2}}(\mathbb E[|\nabla u_m|^2])^{\frac{1}{2}}\rangle_\rho \\
& \leq \langle |b_n-b_m|^2 \rangle_\rho^{\frac{1}{2}} \langle h^3 \mathbb E[|\nabla u_m|^2] \rangle_\rho^{\frac{1}{2}} \leq \langle |b_n-b_m|^2 \rangle_\rho^{\frac{1}{2}} \langle h^3 \mathbb E[|\nabla u_m|^2] \rangle^{\frac{1}{2}} \\
& \leq \langle |b_n-b_m|^2 \rangle_\rho^{\frac{1}{2}} \langle h^6\rangle^{\frac{1}{4}} \langle (\mathbb E[|\nabla u_m|^2])^2 \rangle^{\frac{1}{4}} \\
& (\text{we apply Proposition \ref{apr_prop}, 
and \eqref{ww1} with $q=1$}) \\
& \leq 
c\langle |b_n-b_m|^2 \rangle_\rho^{\frac{1}{2}} \|f\|_{12}^{3} \|\nabla f\|_4 \\
&  (\text{we apply Lemma \ref{lem1}(\textit{iii})}) \\
& \rightarrow 0 
\quad \text{ as }n,m \rightarrow \infty.
\end{align*}
The proof of Lemma \ref{cl1} is complete.
\end{proof}

Lemma \ref{cl1} allows to prove that $\{u_m\}$ is a Cauchy sequence in $L^\infty(J_T,L^2(\Omega,L^2_\rho))$.

\begin{lemma}\label{l:thrm1-iii} 
Let $\beta_2\sqrt{\delta}<\frac{\sigma^2}{2}$, 
$f \in C_c^\infty$ 
and $u_m$ be the strong solution to \eqref{eq5}.
Provided that $\kappa>0$ in the definition of  $\rho$ 
is chosen sufficiently small, it holds that 
$u_m$ converges in $L^2(\Omega,L_\rho^2)$ to a process $u$, 
uniformly in $t \in J_T$.
\end{lemma}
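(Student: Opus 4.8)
The plan is to read off the assertion from Lemma \ref{cl1}: I will show that $\{u_m\}$ is a Cauchy sequence in the Banach space $\mathcal B$ of bounded maps $w:J_T\to L^2(\Omega,L^2_\rho)$, normed by $\|w\|_{\mathcal B}:=\sup_{t\in J_T}\bigl(\mathbb E\|w(t)\|_{2,\rho}^2\bigr)^{1/2}$, and then invoke its completeness. First note that each $u_m$ lies in $\mathcal B$: by the moment bound recorded after \eqref{eq5} (with $p=2$, $\alpha=0$), $\mathbb E u_m^2\in L^\infty(J_T\times\mathbb R^d)$, so $\sup_{t\in J_T}\mathbb E\|u_m(t)\|_{2,\rho}^2=\sup_{t\in J_T}\langle\rho\,\mathbb E u_m^2(t)\rangle\le\|\mathbb E u_m^2\|_{L^\infty(J_T\times\mathbb R^d)}\langle\rho\rangle<\infty$, using $\langle\rho\rangle<\infty$ (which holds since $\theta>\tfrac d2$).

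The key (and essentially only) step is to convert the estimate of Lemma \ref{cl1} into a bound on $\|u_n-u_m\|_{\mathcal B}$. By Tonelli's theorem, for every $t\in J_T$,
\begin{equation*}
\mathbb E\|u_n(t)-u_m(t)\|_{2,\rho}^2=\bigl\langle\rho\,\mathbb E|u_n(t)-u_m(t)|^2\bigr\rangle,
\end{equation*}
and, writing $g:=\mathbb E|u_n(t)-u_m(t)|^2\ge 0$ and applying the Cauchy--Schwarz inequality in $L^2_\rho$,
\begin{equation*}
\bigl\langle\rho\,g\bigr\rangle=\bigl\langle\sqrt\rho\cdot\sqrt\rho\,g\bigr\rangle\le\langle\rho\rangle^{1/2}\,\langle\rho\,g^2\rangle^{1/2}=\langle\rho\rangle^{1/2}\,\|g\|_{2,\rho}.
\end{equation*}
Taking the supremum over $t\in J_T$ and using Lemma \ref{cl1} together with $\langle\rho\rangle<\infty$ gives
\begin{equation*}
\|u_n-u_m\|_{\mathcal B}^2\le\langle\rho\rangle^{1/2}\sup_{t\in J_T}\bigl\|\mathbb E|u_n(t)-u_m(t)|^2\bigr\|_{2,\rho}\longrightarrow 0\qquad(n,m\to\infty),
\end{equation*}
so $\{u_m\}$ is Cauchy in $\mathcal B$.

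Since $L^2(\Omega,L^2_\rho)$ is a Hilbert space, $\mathcal B$ is complete, so there is $u\in\mathcal B$ with $\|u_m-u\|_{\mathcal B}\to 0$, i.e. $u_m(t)\to u(t)$ in $L^2(\Omega,L^2_\rho)$ uniformly in $t\in J_T$; in particular $u\in L^\infty(J_T,L^2(\Omega,L^2_\rho))$. For each fixed $t$, $u(t)$ is an $L^2$-limit of the $\mathcal F_t$-measurable random fields $u_m(t)$, hence $\mathcal F_t$-measurable, so $u$ is a process in the required sense. I do not expect any genuine obstacle here: the analytic heart of the matter, namely the uniform-in-$t$ Cauchy estimate, has already been carried out in Lemma \ref{cl1} (resting on Propositions \ref{apr_prop} and \ref{apr_prop-2}), and what remains is the elementary passage between the two weighted norms via $\langle\rho\rangle<\infty$ and a standard completeness argument.
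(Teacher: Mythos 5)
Your proof is correct and follows essentially the same route as the paper: both reduce the claim to Lemma \ref{cl1} via the Cauchy--Schwarz inequality $\langle\rho\,g\rangle\le\langle\rho\rangle^{1/2}\|g\|_{2,\rho}$ with $g=\mathbb E|u_n-u_m|^2$, and then conclude by completeness. Your extra remarks (each $u_m$ lies in the relevant space, and adaptedness of the limit) are harmless elaborations of what the paper leaves implicit.
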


\begin{proof}
%By Lemma \ref{cl1},
Let $\kappa$ be small enough and  $\mu$ greater than or equal to the
$\mu(\delta, c_\delta)$. Let $\mu\ge \mu(\delta, c_\delta)$. Then by
Lemma \ref{cl1},
$$
\sup_{t \in J_T}\mathbb E\|(u_n(t)-u_m(t))\|_{2,\rho}^2 \leq \langle \rho \rangle^{\frac{1}{2}} \sup_{t \in J_T}\|\mathbb E|u_n(t)-u_m(t)|^2\|_{2,\rho} \rightarrow 0
$$
as $m$, $n \rightarrow \infty$. Thus, we can define 
$$
u(t):=s{\mbox-}L^2(\Omega,L^2_\rho)\mbox{-}\lim_{m}u_m(t) \quad \text{ uniformly in } t \in J_T.
$$
The proof is complete
\end{proof}

We are in position to give
the proof of Theorem \ref{thm1}.

\medskip

\begin{proof}[Proof of Theorem \ref{thm1}] It suffices to carry out the proof for 
$f \in C_c^\infty$, and then use a density argument.

It follows from the assumption $\sqrt{\delta}<\frac{\sigma^2}{2\beta_2}$ that $p\ge 2$ is in the interval $(p_c, \infty)$, $p_c=\big(1-\frac{\sqrt{\delta}}{\sigma^2}\big)^{-1}$. (Indeed, $p_c<2$ if and only if $\sqrt{\delta}<\frac{\sigma^2}{2}$. In particular, $p_c<2$ if $\sqrt{\delta}<\frac{\sigma^2}{2\beta_2}$ since $\beta_2>1$.)
Let $\mu(\delta, c_\delta, p)$ be the constant from Proposition \ref{apr_prop}. Assume that $\mu\ge \mu(\delta, c_\delta, p)$. Then the conclusions of Proposition \ref{apr_prop} are valid.

\smallskip

We prove (\textit{i}) first.  We do this in two steps.

\textit{Step 1.}
Selecting $\kappa$ sufficiently small so that Lemma \ref{l:thrm1-iii}  applies, we obtain
that
$u_m$ converges in $L^2(\Omega,L_\rho^2)$ to a process $u$, 
uniformly in $t \in J_T$.
Thus $u \in L^\infty(J_T,L^2_{\loc}(\mathbb R^d,L^2(\Omega))$, and we have for all $t \in J_T$,
\begin{equation}
\label{c1_}
u_m \rightarrow u \quad \text{ in } L^\infty(J_T,L^2(\Omega,L_\rho^2)),
\end{equation}
which yields
\begin{equation}
\label{c2_}
\int_0^t u_m ds \rightarrow \int_0^t u ds \quad \text{ in } L^2(\Omega,L^2_\rho);
\end{equation}
the latter, \eqref{e2} and a standard weak compactness argument yield
\begin{equation}
\label{c3_}
\nabla \int_0^t u_m ds \rightarrow \nabla \int_0^t u ds \quad \text{ weakly in } L^2(\Omega,L_\rho^2(\mathbb R^d,\mathbb R^d)).
\end{equation}

\medskip

\textit{Step 2.}\,Given a test function $\varphi \in C_c^\infty$, we multiply \eqref{eq5} by $\rho\varphi$, integrate and write (we take $\mu=0$ to shorten calculations)
\begin{align}
\langle u_m(t)-u(t),\rho\varphi \rangle  + \langle u(t),\rho\varphi\rangle - \langle f,\rho\varphi\rangle 
& =   - \bigl\langle (b_m-b)\cdot \nabla \int_0^t u_mds, \rho\varphi \bigr\rangle - \bigl\langle b\cdot\nabla \int_0^t u_mds, \rho \varphi \bigr\rangle \notag \\ 
& + \sigma\bigl\langle \int_0^t(u_m-u)dB_s,\nabla \rho\varphi \bigr\rangle + \sigma\bigl\langle \int_0^tudB_s,\nabla \rho \varphi \bigr\rangle \label{g2} \\
& -\frac{\sigma^2}{2}\bigl\langle \nabla \int_0^t(u_m-u)ds, \nabla \rho \varphi \bigr\rangle - \frac{\sigma^2}{2}\bigl\langle \nabla \int_0^t uds, \nabla \rho \varphi \bigr\rangle. \notag
\end{align}

Let us now note the following.
In view of \eqref{c1_} and \eqref{c3_},
$
\langle u_m(t)-u(t),\rho\varphi \rangle \equiv \langle u_m(t)-u(t),\varphi \rangle_\rho \rightarrow 0 \text{ in } L^2(\Omega)$. Similarly, 
using \eqref{c3_} and \eqref{two_est},
\begin{equation}
\tag{a}
\bigl\langle \nabla \int_0^t(u_m-u)ds, \nabla \rho \varphi \bigr\rangle  \rightarrow 0 \text{ weakly in } L^2(\Omega),
\end{equation}
and, since $\varphi |b| \in L^2_\rho$ (using that $\varphi$ has compact support),
\begin{equation}
\tag{b}
\bigl\langle b\cdot\nabla \int_0^t u_m ds, \rho\varphi \bigr\rangle \rightarrow \bigl\langle b\cdot\nabla \int_0^t uds, \rho\varphi \bigr\rangle \text{ weakly in } L^2(\Omega).
\end{equation}
By \eqref{e2}, $\|\nabla\int_0^t u_mds\|_{L^2(\Omega,L^2_\rho)} \leq 
c_1$ with $c_1<\infty$ independent of $m$, 
and 
 $\varphi |b_m-b_n| \rightarrow 0$ in $L^2_\rho$ (in fact, in $L^2$). Thus
\begin{equation}
\tag{c}
\bigl\langle (b_m-b)\cdot\nabla \int_0^t u_mds, \rho\varphi \bigr\rangle \rightarrow 0 \text{ in } L^2(\Omega).
\end{equation}
Finally, let us show that
\begin{equation}
\tag{d}
\bigl\langle \int_0^t(u_m-u)dB_s,\nabla \rho\varphi \bigr\rangle  \rightarrow 0 \text{ in } L^2(\Omega).
\end{equation}
Indeed, using It\^{o}'s isometry, we have using \eqref{two_est}
\begin{align*}
\mathbb E\biggl|\bigl\langle \int_0^t(u_m-u)dB_s,\nabla \rho \varphi \bigr\rangle\biggr|^2 & \leq 
c_2\mathbb E\langle \big|\int_0^t(u_m-u)dB_s\big|^2\rangle_\rho 
\langle |\varphi|^2\rangle_\rho \\
& = c_3 \langle \mathbb E \int_0^t (u_m-u)^2 ds\rangle_\rho 
\rightarrow 0 \quad \text{ by \eqref{c1_}}.
\end{align*}
The convergence (d) follows.

Thus, using (a)-(d), we can pass to the $L^2(\Omega)$-weak limit in \eqref{g2} as $m \rightarrow \infty$, obtaining that $u$ satisfies \eqref{eq__} (with test functions $\varphi \rho$ which, clearly, exhaust $C_c^\infty$).

The estimates in \eqref{grad_reg}, \eqref{grad_reg_} now follow from Proposition \ref{apr_prop}.

\medskip

The last assertion (\textit{ii}) is Lemma \ref{l:thrm1-iii} proved above.

The proof of Theorem \ref{thm1} is complete.
\end{proof}

\section{Proof of Theorem \ref{thm1_reg}}

\begin{proof}[Proof of Theorem \ref{thm1_reg}]  Part (a) follows from Theorem 
\ref{thm1}(\textit{i}).
The last assertion, \eqref{grad_reg2},
follows from Proposition \ref{apr_prop-2} and Lemma \ref{l:thrm1-iii}.
So we only need to prove part (b).

Since the weak-$L^2(J_T \times \Omega)$ limit of any sequence of $(\mathcal F_t)$-progressively measurable processes on $J_T$  remains $(\mathcal F_t)$-progressively measurable
and $t \mapsto \langle u_m(t), \varphi\rangle$ is $(\mathcal F_t)$-progressively measurable for every $m$,
in view of \eqref{c2_}, the process $t \mapsto \langle u(t), \varphi\rangle$ is $(\mathcal F_t)$-progressively measurable as well.
The proof of \eqref{eq___} follows closely the proof of \eqref{eq__} above except that now, instead of \eqref{e2}, we appeal to the Sobolev regularity estimate \eqref{grad_reg3} with $q=1$.

The existence of a continuous $(\mathcal F_t)$-semi-martingale modification of $t \mapsto \langle u(t), \varphi\rangle$ is a consequence of the identity \eqref{eq___}.

The proof of Theorem \ref{thm1_reg} is complete.
\end{proof}

\section{Proof of Theorem \ref{thm2} (weak uniqueness)}

\label{unique_sect}

The fact that \eqref{eq2} has at least one weak solution was proved in Theorem \ref{thm1_reg}. We now prove its uniqueness.
We adopt the argument of \cite[Sect.\,3]{BFGM}. 
We will need the following definitions and results. 
Let us fix a version of the Brownian motion $B_t$ having continuous trajectories $B_t(\omega)$ for every $\omega \in \Omega.$

\smallskip

%RS Do we need $\kappa$ small enough and $\mu$ larger than or equal to some $\mu(\delta, c_\delta)$ in the following lemma?
\begin{lemma}
\label{lem1_w}
Let $b \in \mathbf{F}_\delta$ with 
$\sqrt{\delta}<\frac{\sigma^2}{2\beta_2}$ and $f\in W^{1, 4}$.
Let $u=u(t,x,\omega)$ be a weak solution to \eqref{eq2}. Then for a.e. $\omega \in \Omega$,
$$
\tilde{u}^\omega(t,x):=u(t,x+\sigma B_t(\omega),\omega)
$$
is a weak solution to the Cauchy problem 
\begin{equation}
\label{random_pde1}
\partial_t \tilde{u}^\omega + \mu \tilde{u}^\omega + \tilde{b}^\omega \cdot \nabla \tilde{u}^\omega=0, \quad \tilde{u}^\omega|_{t=0}=f, \quad \text{ where }\; \tilde{b}^\omega(t,x):=b(x+\sigma B_t(\omega)),
\end{equation}
that is, the following are true:
\smallskip

{\rm 1)} 
$\tilde{u}^\omega \in L^\infty(J_T,W^{1,2}_{\rho})$;
\smallskip

{\rm 2)} for every 
$\psi \in C^1(J_T,C_c^\infty)$, 
the function $t \mapsto \langle \tilde{u}^\omega(t),\psi(t)\rangle $ has a continuous representative, i.e.\,a continuous function which coincides with $t \mapsto \langle \tilde{u}^\omega(t),\psi(t)\rangle $ for a.e.\,$t \in J_T$;

{\rm 3)} for every 
$\psi \in C^1(J_T,C_c^\infty)$,
this continuous representative of $t \mapsto \langle \tilde{u}^\omega(t),\psi(t)\rangle $ satisfies for every $t \in J_T$,
$$
\langle \tilde{u}^\omega(t),\psi(t)\rangle=\langle f,\psi(0)\rangle + \mu \int_0^t \langle \tilde{u}^\omega(s), \psi(s) \rangle ds + \int_0^t \langle \tilde{u}^\omega(s), \partial_s \psi(s)\rangle ds - \int_0^t \langle \nabla \tilde{u}^\omega(s), \tilde{b}^\omega(s) \psi(s)\rangle ds.
$$ 
\end{lemma}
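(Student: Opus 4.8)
\emph{Strategy.} The plan is to remove the stochastic term from \eqref{eq2} by passing to the moving frame $x\mapsto x+\sigma B_t(\omega)$, i.e.\ an It\^o--Wentzell (Kunita) change of variables. Since a weak solution in the sense of (a), (b) of Theorem \ref{thm1_reg} is only $W^{1,2}_\rho$-regular in $x$, the composition $u(t,x+\sigma B_t)$ cannot be differentiated directly, so I would first mollify in the space variable, work with $u^\varepsilon(t,\cdot):=u(t,\cdot)*\kappa_\varepsilon$ ($\kappa_\varepsilon$ an even mollifier), and pass to the limit $\varepsilon\downarrow0$ only at the very end.

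\emph{Step 1: a pointwise-in-$x$ It\^o equation for $u^\varepsilon$.} Setting $f^\varepsilon:=f*\kappa_\varepsilon$ and $(b\cdot\nabla u)_\varepsilon:=(b\cdot\nabla u)*\kappa_\varepsilon$ — a genuine $L^1_{\loc}$ function since $b\in L^2_{\loc}$ and $\nabla u(\cdot,\cdot,\omega)\in L^2_{\loc}$ by (a) of Theorem \ref{thm1_reg} — and choosing $\varphi=\kappa_\varepsilon(x-\cdot)$ in the weak identity \eqref{eq___} (equivalently, in the It\^o form \eqref{eq_ito}), then differentiating under the integral sign in $x$, one obtains that for a.e.\,$\omega$ and every $x$ the scalar process $t\mapsto u^\varepsilon(t,x)$ is a genuine It\^o process,
\[
du^\varepsilon(t,x)=\Bigl[-\mu u^\varepsilon-(b\cdot\nabla u)_\varepsilon+\tfrac{\sigma^2}{2}\Delta u^\varepsilon\Bigr](t,x)\,dt-\sigma\nabla u^\varepsilon(t,x)\cdot dB_t,\qquad u^\varepsilon(0,\cdot)=f^\varepsilon,
\]
with all coefficients smooth in $x$. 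To have this simultaneously for all $x$ outside one $\omega$-null set, one runs \eqref{eq___} over a countable dense set of $x$ and uses continuity in $x$ of every term.

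\emph{Step 2: It\^o--Wentzell and the limit.} Fixing a full-measure set of $\omega$ on which $B_\cdot(\omega)$ is continuous and applying the It\^o--Wentzell formula to $u^\varepsilon(t,x+\sigma B_t)$ for each $x$: since $Y_t:=x+\sigma B_t$ has $dY_t=\sigma\,dB_t$, the second-order term $\tfrac{\sigma^2}{2}\Delta u^\varepsilon(t,Y_t)\,dt$ together with the cross-variation term $-\sigma^2\Delta u^\varepsilon(t,Y_t)\,dt$ (coming from the $dB$-coefficient $-\sigma\nabla u^\varepsilon$) exactly cancel the parabolic drift term $\tfrac{\sigma^2}{2}\Delta u^\varepsilon$, and the two stochastic integrals $-\sigma\nabla u^\varepsilon\cdot dB$ and $\nabla u^\varepsilon\cdot dY=\sigma\nabla u^\varepsilon\cdot dB$ cancel as well. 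Hence $\tilde u^{\varepsilon,\omega}(t,x):=u^\varepsilon(t,x+\sigma B_t(\omega))$ is, for a.e.\,$\omega$ and every $x$, absolutely continuous in $t$ and solves the pathwise equation
\[
\partial_t\tilde u^{\varepsilon,\omega}(t,x)=-\mu\tilde u^{\varepsilon,\omega}(t,x)-(b\cdot\nabla u)_\varepsilon(t,x+\sigma B_t(\omega)),\qquad\tilde u^{\varepsilon,\omega}(0,\cdot)=f^\varepsilon.
\]
Testing against $\psi\in C^1(J_T,C_c^\infty)$, integrating by parts in $t$, and then letting $\varepsilon\downarrow0$ — using that $u^\varepsilon\to u$, $\nabla u^\varepsilon\to\nabla u$ in $L^2_{\loc}(J_T\times\mathbb R^d)$ and $(b\cdot\nabla u)_\varepsilon\to b\cdot\nabla u$ in $L^1_{\loc}(J_T\times\mathbb R^d)$ for a.e.\,$\omega$, and that $\psi(s,\cdot-\sigma B_s(\omega))$ stays in a fixed compact set for $s\in J_T$ — yields the identity in 3) for a.e.\,$t$. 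Since its right-hand side is continuous in $t$ (a constant plus Lebesgue integrals), $t\mapsto\langle\tilde u^\omega(t),\psi(t)\rangle$ admits a continuous representative, which gives 2); and 1) follows from (a) of Theorem \ref{thm1_reg} together with the chain rule $\nabla\tilde u^\omega(s,\cdot)=(\nabla u)(s,\cdot+\sigma B_s(\omega))$, which also identifies $\tilde b^\omega\cdot\nabla\tilde u^\omega=(b\cdot\nabla u)(\cdot,\cdot+\sigma B_\cdot)$ as the last term in 3).

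\emph{Main obstacle.} The delicate point is Step 2 for a merely $W^{1,2}_\rho$-regular solution: one must mollify, verify that \eqref{eq___} genuinely produces a pointwise-in-$x$ scalar SPDE with $x$-smooth coefficients (in particular that the stochastic-integral term is correctly identified as $-\sigma\nabla u^\varepsilon\cdot dB_t$), and control the exceptional $\omega$-null sets uniformly in $x$ and along the mollification sequence. By contrast, no commutator (DiPerna--Lions) estimate is needed here, since $b\cdot\nabla u$ is already a well-defined $L^1_{\loc}$ function and is simply carried through the regularization rather than commuted with it.
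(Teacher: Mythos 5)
Your argument is correct and is essentially the proof the paper has in mind: the paper omits the details of this lemma and defers to \cite[Prop.\,3.4]{BFGM}, whose proof is exactly this scheme — regularize in $x$, apply the It\^{o}--Wentzell formula to $u^\varepsilon(t,x+\sigma B_t)$ so that the $\frac{\sigma^2}{2}\Delta$ drift, the quadratic-variation term and the cross-variation term cancel along with the two stochastic integrals, and then pass to the limit against a test function. (The cited reference implements the same idea by applying It\^{o}--Wentzell to the smooth field $y\mapsto\langle u(t),\varphi(\cdot-y)\rangle$, i.e.\ your argument with $\kappa_\varepsilon$ replaced by the test function itself, which dispenses with the final $\varepsilon\downarrow 0$ limit; note also that your Step 1 correctly reads the sign of the $\Delta$-term off the It\^{o} form \eqref{eq_ito} rather than off \eqref{eq___}, where it appears with a misprinted sign relative to \eqref{eq__}.)
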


The proof of Lemma \ref{lem1_w} follows closely the proof of \cite[Prop.\,3.4]{BFGM} (taking into account the definition of 
the weak solution to \eqref{eq2}) and we omit the details.

\medskip

Consider the terminal value problem
\begin{equation}
\label{v_eq}
dv_m + \mu v_m dt + \nabla \cdot (b_m v_m) dt + \sigma \nabla v_m \circ dB_t=0, \quad t \in [0,t_*], \quad v_m|_{t=t_*}=v_0 \in C_c^\infty,
\end{equation}
where $b_m \in C_c^\infty(\mathbb R^d,\mathbb R^d)$ ($m=1, 2, \dots$)
(since $b_m$ are bounded and smooth, we have strong existence and uniqueness for this equation).

The following is an analogue of \cite[Cor.\,3.8]{BFGM}.

\begin{lemma}
\label{lem2_w}
$
\tilde{v}^\omega_m(t,x):=v_m(t,x+\sigma B_t(\omega))
$
satisfies, for a.e.\,$\omega \in \Omega$, $\tilde{v}_m^\omega \in C^1([0,t_*],C^\infty_c)$ and
$$
\partial_t \tilde{v}^\omega_m  + \mu \tilde{v}^\omega_m + \nabla \cdot (b^\omega_m \tilde{v}^\omega_m)=0, \quad \tilde{v}^\omega_m(t_*,x)=v_0(x+\sigma B_{t_*}(\omega)).
$$
\end{lemma}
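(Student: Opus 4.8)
The plan is to pass from the Stratonovich equation \eqref{v_eq} to its It\^{o} form and then apply the It\^{o}--Wentzell (Kunita) formula for the composition of a semimartingale random field with a process. Since $b_m \in C_c^\infty(\mathbb R^d,\mathbb R^d)$, by \cite[Ch.\,6]{Ku} equation \eqref{v_eq} has a unique strong solution $v_m$ which, for a.e.\,$\omega$, is smooth in $x$ and such that $t \mapsto v_m(t,\cdot)$ is a continuous semimartingale with values in $C^\infty$; using \eqref{strat_id} it solves in It\^{o} form
\[
dv_m = \Bigl(-\mu v_m - \nabla\cdot(b_m v_m) + \tfrac{\sigma^2}{2}\Delta v_m\Bigr)dt - \sigma\, \nabla v_m\cdot dB_t .
\]
Moreover, $v_m(t,\cdot)$ stays compactly supported (for a.e.\,$\omega$, uniformly on $[0,t_*]$): this follows from the representation of $v_m$ along the stochastic characteristics of \eqref{v_eq}, which for each fixed $\omega$ form a flow of diffeomorphisms mapping compact sets to compact sets.

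Next I would fix the continuous version $B_t(\omega)$ and apply the It\^{o}--Wentzell formula to $\tilde v^\omega_m(t,x) := v_m(t, x + \sigma B_t(\omega))$, treating $x$ as a parameter and $\phi_t(x):=x+\sigma B_t(\omega)$ as the process into which $v_m(t,\cdot)$ is substituted. This produces four contributions: the intrinsic differential $(dv_m)(t,\phi_t(x))$; the transport term $\sigma\,\nabla v_m(t,\phi_t(x))\cdot dB_t$ coming from $d\phi_t(x)=\sigma\,dB_t$; the quadratic-variation term $\tfrac{\sigma^2}{2}\Delta v_m(t,\phi_t(x))\,dt$ from $d[\phi^i,\phi^j]_t=\sigma^2\delta_{ij}\,dt$; and the cross term $d\bigl[\,\text{mart.\ part of }\partial_{x_i} v_m(\cdot,\phi_\cdot(x)),\ \sigma B^i\,\bigr]_t$, which equals $-\sigma^2\Delta v_m(t,\phi_t(x))\,dt$ since the martingale part of $t\mapsto\partial_{x_i}v_m(t,y)$ is $-\sigma\int \partial_{x_k}\partial_{x_i}v_m(s,y)\,dB^k_s$.

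The key cancellations are then immediate: the $dB_t$-term of $(dv_m)$ is $-\sigma\nabla v_m\cdot dB_t$, cancelled exactly by the transport term, so $\tilde v^\omega_m$ has locally bounded variation in $t$; and the three second-order $dt$-terms sum to $\tfrac{\sigma^2}{2}\Delta v_m + \tfrac{\sigma^2}{2}\Delta v_m - \sigma^2\Delta v_m = 0$. Hence $d\tilde v^\omega_m(t,x) = \bigl(-\mu v_m - \nabla\cdot(b_m v_m)\bigr)(t,\phi_t(x))\,dt$. Since a translation commutes with $\nabla$, we have $\bigl(\nabla\cdot(b_m v_m)\bigr)(t,\phi_t(x)) = \nabla_x\cdot\bigl(b^\omega_m(t,x)\,\tilde v^\omega_m(t,x)\bigr)$, where $b^\omega_m(t,x):=b_m(x+\sigma B_t(\omega))$, so $\partial_t\tilde v^\omega_m + \mu\tilde v^\omega_m + \nabla\cdot(b^\omega_m\tilde v^\omega_m)=0$ with terminal value $\tilde v^\omega_m(t_*,x)=v_0(x+\sigma B_{t_*}(\omega))$. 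Because the right-hand side is continuous in $t$ with values in $C_c^\infty$ (smoothness and compact support inherited from $v_m(t,\cdot)$), we obtain $\tilde v^\omega_m\in C^1([0,t_*],C_c^\infty)$; the exceptional $\omega$-null set does not depend on $x$ by continuity and separability.

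I expect the main obstacle to be the bookkeeping in the It\^{o}--Wentzell formula --- rigorously identifying the stochastic differential of the random field $v_m$ and of its spatial derivatives (so that the cross-variation term is computed correctly), and verifying the regularity/integrability hypotheses of the Kunita formula for this particular $v_m$. Alternatively, one may bypass the general formula and argue exactly as in \cite[Prop.\,3.4, Cor.\,3.8]{BFGM}: establish the identity first for a mollification $v_m\ast\zeta_\varepsilon$ (where the computation above is elementary), then remove the regularization using the smoothness of $v_m$ and $b_m$.
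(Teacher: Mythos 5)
Your argument is correct. Note that the paper itself gives no proof of Lemma \ref{lem2_w}: it is stated as an analogue of \cite[Cor.\,3.8]{BFGM} and the verification is left to the reader, so there is nothing internal to compare against. Your It\^{o}--Wentzell computation is the standard direct version of that verification, and the bookkeeping is right: after passing from \eqref{v_eq} to It\^{o} form via \eqref{strat_id}, the martingale contribution $-\sigma\nabla v_m\cdot dB_t$ of $dv_m$ cancels against the transport term $\sigma\nabla v_m\cdot dB_t$ generated by $x+\sigma B_t$, and the three second-order terms $\tfrac{\sigma^2}{2}\Delta v_m+\tfrac{\sigma^2}{2}\Delta v_m-\sigma^2\Delta v_m$ (drift correction, quadratic variation, cross variation) sum to zero, leaving exactly the random continuity equation; the compact-support and $C^1$-in-time claims follow from the smooth compactly supported flow of $b_m$ as you say. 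The mollification route you mention as a fallback is in fact the one taken in \cite[Prop.\,3.4, Cor.\,3.8]{BFGM}, so either variant is consistent with the intended proof. The only point you gloss over --- as does the paper --- is that \eqref{v_eq} is a terminal value problem, so the It\^{o} integrals appearing in the argument should be understood in the backward sense (or after time reversal); this does not affect the algebra of the cancellations.
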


We will also need

\begin{lemma}
\label{step3}
Let $\sqrt{\delta}<\frac{\sigma^2}{6}$.
There exist a constant $\mu(c_\delta) \geq 0$ and a sufficiently small $\kappa>0$ (in the definition of $\rho$) such that 
$$
\sup_{t \in J_T}\|\rho^{-1}\mathbb E[v_m^2(t)]\|_2 \leq \|\rho^{-1}v_0\|_{4}^2, \quad \mu \geq 
\mu(c_\delta), m=1, 2, \dots
$$
where $v_m$ is the strong solution to \eqref{v_eq}. 
\end{lemma}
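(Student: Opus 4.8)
The plan is to push the claim through a weighted $L^2$ energy estimate for the deterministic parabolic equation satisfied by $w:=\mathbb E[v_m^2]$, the decisive point being that the diffusion term absorbs the contribution of $\nabla\cdot b_m$ thanks to the form-boundedness of $b_m$ — which is what forces the threshold $\sqrt{\delta}<\sigma^2/6$.

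First I would rewrite \eqref{v_eq} in It\^o form using \eqref{strat_id}, apply It\^o's formula to $v_m^2$, and take expectation. Since the resulting stochastic integral is a (local) martingale, $w=\mathbb E[v_m^2]$ solves a deterministic linear parabolic equation with terminal datum $w(t_*)=v_0^2$ (recall $v_0\in C_c^\infty$ is non-random); using $2v_m\nabla\cdot(b_m v_m)=\nabla\cdot(b_m v_m^2)+(\nabla\cdot b_m)v_m^2$, this equation has the form
\begin{equation*}
\partial_t w=-\tfrac{\sigma^2}{2}\Delta w-\nabla\cdot(b_m w)-(\nabla\cdot b_m)\,w+2\mu w,\qquad w(t_*)=v_0^2,
\end{equation*}
the precise signs being those dictated by the (backward) terminal-value formulation of \eqref{v_eq}. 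All the integrations by parts below are justified because, $b_m$ being smooth and compactly supported, $v_m(t,\cdot)$ is smooth with the required decay.

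Next, note that $\rho^{-1}=(1+\kappa|x|^2)^{\theta}\ge1$ satisfies the analogue of \eqref{two_est}, namely $|\nabla\rho^{-1}|\le\theta\sqrt{\kappa}\,\rho^{-1}$, whence also $|\nabla\rho^{-2}|\le2\theta\sqrt{\kappa}\,\rho^{-2}$ and $|\Delta\rho^{-2}|\le C(d,\theta)\kappa\,\rho^{-2}$. Multiply the equation for $w$ by $\rho^{-2}w$ and integrate. The diffusion term yields, after one integration by parts and the above bounds, at least $\tfrac{\sigma^2}{2}\|\rho^{-1}\nabla w\|_2^2$ minus a term controlled by $C\kappa\|\rho^{-1}w\|_2^2$, while the term $2\mu w$ yields $2\mu\|\rho^{-1}w\|_2^2$. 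For the two terms carrying $\nabla\cdot b_m$ I would integrate by parts so as to express them through $b_m\cdot\nabla(\rho^{-1}w)$ — e.g. $\langle(\nabla\cdot b_m)(\rho^{-1}w)^2\rangle=-2\langle b_m(\rho^{-1}w),\nabla(\rho^{-1}w)\rangle$, and similarly for $\langle\rho^{-2}w,\nabla\cdot(b_m w)\rangle$ — and then apply $b_m\in\mathbf{F}_\delta$ (with $c_\delta$ independent of $m$) to the test function $\rho^{-1}w\in W^{1,2}$, using $\|\nabla(\rho^{-1}w)\|_2\le\|\rho^{-1}\nabla w\|_2+\theta\sqrt{\kappa}\|\rho^{-1}w\|_2$ and Young's inequality. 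Collecting everything,
\begin{equation*}
\tfrac12\partial_t\|\rho^{-1}w\|_2^2\ge\Bigl(\tfrac{\sigma^2}{2}-3\sqrt{\delta}-\varepsilon\Bigr)\|\rho^{-1}\nabla w\|_2^2+\bigl(2\mu-C(\delta,c_\delta,\theta,\kappa,\varepsilon)\bigr)\|\rho^{-1}w\|_2^2,
\end{equation*}
where the coefficient $3$ is $\tfrac32\cdot 2$, the $\tfrac32$ arising from the two $\nabla\cdot b_m$ contributions and the $2$ from the integration by parts just mentioned, and $\varepsilon>0$ is the Young parameter. Since $\sqrt{\delta}<\tfrac{\sigma^2}{6}$, I may fix $\varepsilon$ and then $\kappa$ small enough that $\tfrac{\sigma^2}{2}-3\sqrt{\delta}-\varepsilon\ge0$ and drop that term; setting $\mu(c_\delta):=\tfrac12 C(\delta,c_\delta,\theta,\kappa,\varepsilon)$, for $\mu\ge\mu(c_\delta)$ we get $\partial_t\|\rho^{-1}w(t)\|_2^2\ge0$, so $t\mapsto\|\rho^{-1}w(t)\|_2^2$ is non-decreasing and attains its maximum over $J_T$ at the terminal time $t_*$. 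Hence
\begin{equation*}
\sup_{t\in J_T}\|\rho^{-1}\mathbb E[v_m^2(t)]\|_2^2\le\|\rho^{-1}v_0^2\|_2^2=\langle\rho^{-2}v_0^4\rangle\le\langle\rho^{-4}v_0^4\rangle=\|\rho^{-1}v_0\|_4^4,
\end{equation*}
using $\rho^{-1}\ge1$; taking square roots gives the claim.

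The step I expect to be the main obstacle is the treatment of $\nabla\cdot b_m$: a priori it is bounded only in an $m$-dependent way, and the whole point is to convert it, by integrating by parts against the weighted test function $\rho^{-2}w$, into a gradient pairing that form-boundedness controls uniformly in $m$. This conversion necessarily produces a $\|\rho^{-1}\nabla w\|_2^2$-term with coefficient $3\sqrt{\delta}$, which is why one needs $\sqrt{\delta}<\sigma^2/6$ so that the diffusion term, contributing $+\tfrac{\sigma^2}{2}\|\rho^{-1}\nabla w\|_2^2$, leaves this quantity with a favourable sign; the remaining bookkeeping (weight-gradient terms, $c_\delta$-terms) is then harmless for $\kappa$ small and $\mu$ large.
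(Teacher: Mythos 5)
Your proposal is correct and follows essentially the same route as the paper: derive the deterministic parabolic equation for $w=\mathbb E[v_m^2]$, perform a weighted $L^2$ energy estimate in which the two $\nabla\cdot b_m$ contributions are integrated by parts into a pairing $\langle b_m w,\nabla w\rangle$ with total coefficient $3$, and absorb it via form-boundedness, which yields exactly the coefficient $\frac{\sigma^2}{2}-3\sqrt{\delta}$ and hence the threshold $\sqrt{\delta}<\frac{\sigma^2}{6}$. The only cosmetic difference is that the paper works with the forward equation (writing the drift terms as $-2\nabla\cdot(b w)+b\cdot\nabla w$) and treats the weight $\rho^{-1}$ as a perturbation handled by $|\nabla\rho^{-1}|\le\theta\sqrt{\kappa}\,\rho^{-1}$, whereas you keep the terminal-value form and carry the weight through explicitly; both are equivalent.
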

\begin{proof}
Without loss of generality, we will carry out the proof for the forward equation, 
and will drop the subscript $m$ from $b_m$. Set $w:=\mathbb E[v^2]$. Arguing as in the proof of Proposition \ref{apr_prop}, we obtain that $w$ satisfies
\begin{equation}
\label{w_eq}
\partial_t w + 2\mu w - \frac{\sigma^2}{2}\Delta w -2 \nabla \cdot (bw) + b \cdot \nabla w=0, \quad w(0)=v_0^2.
\end{equation}

We first carry out the proof for $\rho \equiv 1$. 
Multiplying the previous equation by $w$ and integrating, we obtain
$$
\frac{1}{2}\partial_t \langle |w|^2\rangle + 2\mu\langle |w|^2\rangle +
\frac{\sigma^2}{2}\langle |\nabla w|^2\rangle + 3\langle \nabla w,bw\rangle=0.
$$
Applying the quadratic inequality and the form-boundedness condition $b \in \mathbf{F}_\delta$, 
we get that, for any $\gamma>0$, 
\begin{align*}
\frac{1}{2}\partial_t \langle |w|^2\rangle + (2\mu-3\gamma c_\delta)\langle |w|^2\rangle + \biggl[\frac{\sigma^2}{2}-3(\gamma \delta + \frac{1}{4\gamma}) \biggr]\langle |\nabla w|^2\rangle  \leq 0, 
\end{align*}
and so, selecting  $\mu(c_\delta):=\frac{3}{2}\gamma c_\delta$ and $\mu \geq \mu(c_\delta)$, we obtain
$$
\frac{1}{2}\langle |w(t)|^2\rangle +  \biggl[\frac{\sigma^2}{2}-3(\gamma \delta + \frac{1}{4\gamma}) \biggr]\int_0^t \langle |\nabla w|^2\rangle ds \leq  \frac{1}{2}\langle |v_0|^{4}\rangle.
$$
Upon maximizing the coefficient in the square brackets in $\gamma$ (thus, selecting $\gamma=\frac{1}{2\sqrt{\delta}}$), we obtain that the coefficient is positive since  $\sqrt{\delta}<\frac{\sigma^2}{6}$.
In particular, 
it follows that $\sup_{t \in J_T}\|\mathbb E[v_m^2(t)]\|_2 \leq \|v_0\|_{4}^2$.

\medskip

 In presence of $\rho^{-1}$,
we argue as above but get new terms containing $\nabla \rho^{-1}$, which we bound appealing to the estimate
\begin{align*}
|\nabla \rho^{-1}| =\left|\frac{\nabla \rho}{\rho^2}\right|  \leq \theta\sqrt{\kappa}\rho^{-1} \quad \text{(by \eqref{two_est})},
\end{align*}
with $\kappa$ selected sufficiently small. (Note that to justify $\|\rho^{-1}\mathbb E[v_m^2(t)]\|_2<\infty$ we can appeal to qualitative Gaussian upper bound on the heat kernel of \eqref{w_eq}.)
\end{proof}

Let us note that the assumption of the theorem 
$\beta_2\sqrt{\delta}<\frac{\sigma^2}{2}$ implies
$\sqrt{\delta}<\frac{\sigma^2}{6}$.

\medskip

We are now in position to complete the proof of Theorem \ref{thm2}.

\begin{proof}[Proof of Theorem \ref{thm2}]
Let $\mu$ and $\kappa$ be as in Lemma \ref{step3}.
In view of the linearity of the stochastic transport equation, it suffices to show that a weak solution $u$ to \eqref{eq2} with initial condition $u(0)=0$ must be identically zero for all $t \in J_T$. In view of Lemma \ref{lem1_w}, it suffices to show that $\tilde{u}^\omega$ corresponding to $u$ is identically zero a.s.

Let $v_0 \in C_c^\infty$.
It follows from Lemma \ref{lem2_w} that,  for a.e.\,$\omega \in \Omega$, $\tilde{v}^\omega(s)\in C^1(J_T, C_c^\infty)$.
Thus by  Lemma \ref{lem1_w}, 
for a.e.\,$\omega \in \Omega$ with $\psi(s):=\tilde{v}^\omega(s)$, for all $0<t_* \leq T$,
\begin{align*}
\label{u_omega} \tag{$\bullet$} 
&\langle \tilde{u}^\omega(t_*),v_0(\cdot + \sigma B_{t_*}(\omega))\rangle \\
& = \mu \int_0^{t_*} \langle \tilde{u}^\omega(s), \tilde{v}_m^\omega(s)\rangle d + 
\int_0^{t_*} \langle \tilde{u}^\omega(s), \partial_s \tilde{v}_m^\omega(s)\rangle ds - \int_0^{t_*} \langle \nabla \tilde{u}^\omega(s), \tilde{b}^\omega(s)\tilde{v}_m^\omega(s)\rangle ds \\
& = \int_0^{t_*} \langle \nabla \tilde{u}^\omega, 
(\tilde{b}_m^\omega(s)-\tilde{b}^\omega(s))
\tilde{v}_m^\omega\rangle ds =: I.
\end{align*}

Step 1. Let us first show that
\begin{equation}
\label{conv_E}
\tag{$\bullet\bullet$}
\mathbb E \left| \int_0^{t_*} \langle \nabla u, (b-b_m)v_mn \rangle ds \right| \rightarrow 0 \quad \text{as $m \uparrow \infty$}.
\end{equation}
We have
\begin{align*}
& \mathbb E \left| \int_0^{t_*} \langle \nabla u, (b-b_m)v_m \rangle ds \right| \leq \int_0^{t_*} \big\langle |b-b_m| \mathbb E\big[|\nabla u|^2\big]^{\frac{1}{2}}\mathbb E\big[|v_m|^2\big]^{\frac{1}{2}}\big\rangle ds \\
& \leq \biggl(\int_0^{t_*} \big\langle \rho|b-b_m|^2\big\rangle ds \biggr)^{\frac{1}{2}} \biggl(\int_0^{t_*} \big\langle  (\mathbb E\big[|\nabla u|^2\big])^2 \big\rangle ds \biggr)
^{\frac{1}{4}} 
\biggl(\int_0^{t_*} \big\langle\rho^{-2}(\mathbb E\big[|v_m|^2\big])^2\big\rangle ds \biggr)
^{\frac{1}{4}}.
\end{align*}
The first integral converges to $0$ as $m \uparrow \infty$ by Lemma \ref{lem1}(\textit{iii}), the second integral is finite by the definition of weak solution before Theorem \ref{thm2}, and the third integral is bounded from above uniformly in $m$ by $\sqrt{t_*}\|\rho^{-1}v_0\|^2_{4}<\infty$, see Lemma \ref{step3}. Thus, \eqref{conv_E} follows.

Step 2.~By Step 1, there exists a 
subset $\Omega_{t_*,v_0} \subset \Omega$  of probability 1
and a sequence $m_k \uparrow \infty$ such that for every 
$\omega \in \Omega_{t_*,v_0}$,
$$
\int_0^{t_*} \langle \nabla u, (b-b_{m_k})v_{m_k} \rangle ds \rightarrow 0 \quad \text{ as } m_k \uparrow \infty.
$$
Making the change of variable $x \mapsto x + \sigma B_t(\omega)$ and using the fact 
that $c_{t_*, w}^{-1} \rho(\cdot)\le \rho(\cdot + \sigma B_t(\omega)) \leq c_{t_*, w} \rho(\cdot)$ for some constant $c_{t_*, w}>1$
we obtain that for every 
$\omega \in \Omega_{t_*,v_0}$,
$$
I \rightarrow 0 \quad \text{ as } m_k \uparrow \infty.
$$

Fix a countable dense subset $D$ of $C_c^\infty(\mathbb R^d)$ and define
$$
\tilde{\Omega}:=\bigcap_{t_* \in [0,T] \cap \mathbb Q, v_0 \in D}\, \Omega_{t_*,v_0},
$$
a full measure set in $\Omega$. Applying the diagonal argument (and so passing to a subsequence of $\{\varepsilon_k\}$), we obtain by \eqref{u_omega} and Step 2 that for every $\omega \in \tilde{\Omega}$, $\tilde{u}^\omega(t)=0$ for all $t \in [0,T] \cap \mathbb Q$. Since $t \mapsto \langle \tilde{u}^\omega(t),\varphi\rangle $, $\varphi \in C_c^\infty(\mathbb R^d)$ 
is continuous, we obtain that $\tilde{u}^\omega(t)=0$ for all $t \in [0,T]$ for all $\omega \in \tilde{\Omega}$, as needed. 

The proof of Theorem \ref{thm2} is complete.
\end{proof}

\end{document}